\definecolor{steelblue}{RGB}{70,130,180}
\definecolor{warmred}{RGB}{244, 104, 65}
\newtheorem{assumption}{Assumption}
\newtheorem{remark}{Remark}
\newtheorem{definition}{Definition}
\newtheorem{theorem}{Theorem}
\newtheorem{lemma}{Lemma}
\definecolor{steelblue}{RGB}{70,130,180}
\definecolor{warmred}{RGB}{244, 104, 65}
\begin{document}
\title{An Online Joint Optimization-Estimation Architecture for Distribution Networks}
\author{Yi Guo, \IEEEmembership{Member, IEEE}, 
Xinyang Zhou, \IEEEmembership{Member, IEEE},
Changhong Zhao, \IEEEmembership{Senior Member, IEEE},\\
Lijun Chen, \IEEEmembership{Member, IEEE},
Gabriela Hug, \IEEEmembership{Senior Member, IEEE}
and Tyler H. Summers, \IEEEmembership{Member, IEEE}


\thanks{This material is based on work partially supported by an ETH Z\"{u}rich Postdoctoral Fellowship, the National Science Foundation under grant CMMI-1728605 and Hong Kong RGC Early Career Award No. 24210220. }
\thanks{This work was authored in part by the National Renewable Energy
Laboratory, operated by Alliance for Sustainable Energy, LLC, for the U.S.
Department of Energy (DOE) under Contract No. DE-EE-0007998. Funding
provided by U.S. Department of Energy Office of Energy Efficiency and
Renewable Energy Solar Energy Technologies Office. The views expressed
in the article do not necessarily represent the views of the DOE or the U.S.
Government. The U.S. Government retains and the publisher, by accepting
the article for publication, acknowledges that the U.S. Government retains a
nonexclusive, paid-up, irrevocable, worldwide license to publish or reproduce
the published form of this work, or allow others to do so, for U.S. Government
purposes.}
    \thanks{Y. Guo and G. Hug are with Power Systems Laboratory at ETH Z\"{u}rich, Z\"{u}rich, CH-8092,  Switzerland, email:\{guo, hug\}@eeh.ee.ethz.ch.}
    \thanks{X. Zhou is with Department of Power System Engineering, National Renewable Energy Laboratory, Golden, CO 80401, USA, email: xinyang.zhou@nrel.gov.}
    \thanks{C. Zhao is with Department of Information Engineering, the Chinese University of Hong Kong, HKSAR, China, email: chzhao@ie.cuhk.edu.hk.
    }
    \thanks{L. Chen is with College of Engineering and Applied Science, The University of Colorado, Boulder, CO 80309, USA, email: lijun.chen@colorado.edu.}
    \thanks{T.H. Summers is with the Department
of Mechanical Engineering, The University of Texas at Dallas, Richardson, TX 75080, USA, email: tyler.summers@utdallas.edu.}
\thanks{ \emph{(Corresponding Author: Xinyang Zhou)}.}
}
\maketitle

\begin{abstract}
In this paper, we propose an optimal joint optimization-estimation architecture for distribution networks, which jointly solves the optimal power flow (OPF) problem and static state estimation (SE) problem through an online gradient-based feedback algorithm. The main objective is to enable a fast and timely interaction between the OPF decisions and state estimators with limited sensor measurements. First, convergence and optimality of the proposed algorithm are analytically established. Then, the proposed gradient-based algorithm is modified by introducing statistical information of the inherent estimation and linearization errors for an improved and robust performance of the online OPF decisions. Overall, the proposed method eliminates the traditional separation of operation and monitoring, where optimization and estimation usually operate at distinct layers and different time-scales. Hence, it enables a computationally affordable, efficient and robust online operational framework for distribution networks under time-varying settings.

\end{abstract}

\begin{IEEEkeywords}
optimal power flow, state estimation, online optimization algorithms, distribution networks, power systems, operational architecture, convergence and optimality analysis.
\end{IEEEkeywords}

\section{Introduction}\label{sec:introduction}
\IEEEPARstart{T}{he} increasing integration of distributed energy resources (DERs) is bringing about unprecedented changes to  distribution networks. As a high penetration level of DERs in distribution networks alters the traditional characters of network states to fast and time-varying, an effective operation of these
networks requires the latest knowledge of network states \cite{kroposki2020autonomous}. Moreover, while operation and monitoring infrastructure in distribution networks are not as widely deployed as in transmission grids, they are considered to be important enabler for an efficient integration of renewable resources. To address the issue of fast-varying system states, we envision a joint OPF-SE architecture that tightly interlinks the optimization and monitoring layers in a fast time-scale to continuously and systemically regulate the outputs of DERs for operational targets.

The tasks of monitoring and optimization of the network have been considered as two separate tasks, e.g. \cite{primadianto2016review,gill2013dynamic,molzahn2017survey}. Prior works focused on real-time OPF methods for distribution networks assume complete availability of network states to implement various optimal operation targets \cite{summers2015stochastic,abido2002optimal,tang2017real,qu2019optimal,wang2020asynchronous}. However, in practice, network states must be estimated with a monitoring system subjected to noisy measurements. To better fuse the real-time state information into OPF solvers, the recently proposed OPF frameworks \cite{colombino2019online,zhou2019online, dall2016optimal,gan2016online,bernstein2019real,bolognani2014distributed,liu2017distributed} leverage the measurement feedback-based online optimization methods to close the loop between the physical measurement information and OPF solvers. These designs take into account real-time data in the OPF decisions to mitigate the effects of inherent disturbances and modelling errors. However, it is not realistic to have real-time physical measurements of system states at every point of a distribution network due to the required communication needs, end-user privacy concerns, and high costs. This calls for a joint design to solve the real-time OPF tasks with an additional state estimation (SE) in-the-loop, which enables OPF decisions to react to real-time information from a limited number of deployed sensors.


In both transmission and distribution networks, the optimization-estimation architecture, the Supervisory Control and Data Acquisition (SCADA) system, has been developed and implemented to monitor and control the electrical devices for safe and reliable power delivery \cite{thomas2017power}. A variety of functions, including voltage regulation, economic dispatch, automatic generation control and fault detection, can be achieved based on a well-established communication network. 
However, the current distribution management systems, where the optimization and estimation tasks operate at distinct layers and different time scales, might not be suitable for the future distribution networks with an extensive penetration of DERs. In particular, the time-scale and communication structure required to collect all \emph{node-wise} network states and to solve the optimal dispatch problem may not be consistent with the time-varying distribution-level dynamics. As the most of DERs are connected to distribution networks, it is apparent that the operators have to estimate and optimize the networks faster than ever to cope with renewable variations and yet guarantee system-level optimality. This requests to bypass the current hierarchical setups for a fast control-estimation joint operation, which will be more applicable and practical for an efficient and reliable operation.

To tightly interlink the optimization and estimation tasks in distribution networks, this paper proposes an online gradient-based algorithm to jointly solve the classic OPF and the weighted least squares (WLS) SE problem in parallel, and we demonstrate its performance on the application of voltage regulation in distribution networks. The proposed joint OPF-SE framework allows us to generate optimal online decisions for DERs by utilizing real-time sensor measurements together with an online voltage magnitude estimator. Our preliminary works \cite{guo2020solving,guo2020optimal} proposed a general OPF framework with SE feedback and studied its robustness and convergence performance. The recent paper \cite{picallo2020closing} studied the interaction
between the dynamic state estimation (i.e., Kalman Filter) and
the feedback-based optimization scheme for voltage regulation
in distribution networks.  Here, we significantly extend our previous works in several directions from the perspectives of both power and control engineering:
\begin{itemize}
\item[1)] \textit{\underline{Time Scales:}} One of the significant differences between this submission and our previous works \cite{guo2020solving,guo2020optimal} is that the proposed OPF-SE scheme is posed in different time scales. In our initial work, we solve the OPF problems with SE in the loop by having one gradient update of the OPF problem with the input of a \textbf{fully solved} SE result. In this paper, the time-varying OPF and SE problems are posed in the same time scale, and are jointly solved in parallel. This is done by pursuing each step of the OPF gradient update with only one SE gradient update. This particular design allows online OPF decisions to cope with fast-changing renewable variations while ensuring computational affordability for practical implementation. It also avoids that system changes that occur during the process of the state estimation computations lead to the incorporation of outdated SE information into the OPF problem. To the best of our knowledge, this is the first architecture for distribution networks solving optimization and estimation problems concurrently with a close and timely interaction between two layers, and implemented in an online fashion. 
\item[2)] \textit{\underline{Stochastic Reformulation:}} We leverage the linearized AC power flow equations to formulate the OPF and SE problem but have a state estimation feedback loop to tradeoff between computational efficiency and feasibility. Clearly, the noisy estimation results and power flow linearization errors lead to possible voltage constraint violations. We consider the SE and AC power flow linearization errors in the feedback to reformulate the proposed joint OPF-SE in a stochastic way. Instead of empirically tightening the operational constraints based on the feedback noise as is done in \cite{guo2020optimal}, the statistical information of the feedback noise is used to improve the feasibility and robustness to uncertainties. Namely, a sampling approach based on conditional value-at-risk (CVaR) is applied to quantify the probability of voltage constraint violation. Hence, the proposed online OPF-SE architecture facilitates the close interaction between optimal decisions and the \emph{statistical} knowledge of uncertainties in the SE feedback loop in distribution networks.


\item[3)] \textit{\underline{Theoretical Studies:}} More extensive theoretical analyses are presented in this paper compared to our previous works \cite{guo2020solving,guo2020optimal}. Firstly, convergence and optimality are established for the proposed joint OPF-SE algorithm for a particular time step in a static situation. Specifically, we show that jointly solving OPF-SE problems is equivalent to the results from a single optimization problem. Secondly, due to the stochastic reformulation using CVaR, the stochastic OPF-SE problem is not strongly convex on all primal variables, which compromises the convergence of the proposed algorithm. To overcome this challenge, the proposed primal-dual gradient-based approximation leverages Tikhonov regularization terms on both primal and dual variables to facilitate convergence. The optimality difference caused by the primal-dual regularization terms is rigorously characterized. Thirdly, the online tracking performance and convergence analysis are included as well. Note that the proposed framework and associated analysis results considerably broaden the approaches in\cite{guo2020solving,guo2020optimal,dall2016optimal} by establishing convergence and optimality under time-varying conditions with limited state monitoring capabilities. It is also worth to emphasize that we conduct a general error analysis to bound the regularization errors caused by the Tikhonov primal-dual regularization terms. The analysis of regularization errors offer contribution over \cite{koshal2009distributed,koshal2011multiuser}, where the optimality difference caused only by dual variable is characterized. In addition, the error analysis is not only limited to the online joint OPF-SE problem in this paper, but can apply to a general multi-user optimization solved by primal-dual gradient approaches with regularization \cite{koshal2011multiuser}.

\end{itemize}
The rest of the paper is organized as follows. Section \ref{sec:system_model} introduces the system model and proposes the joint OPF-SE algorithm. Furthermore, convergence and optimality of the algorithm are established. In Section \ref{sec:stochastic_online}, a stochastic OPF-SE framework is proposed and solved with a regularized primal-dual online gradient method. Section \ref{sec:numerical} presents numerical results, and Section \ref{sec:conclusions} concludes the paper.

\textit{Notation}. The set of real numbers is denoted by $\mathbb{R}$ and the set of non-negative numbers is denoted by $\mathbb{R}_+$. The set of complex numbers is denoted by $\mathbb{C}$. We use $|\cdot|$ to denote the absolute value of a number or the cardinality of a set. Given a matrix $A\in\mathbb{R}^{n \times m}$, $A^\top$ denotes its transpose. We write $A \succeq 0$ ($A \succ 0$) to denote that $A$ is positive semi-definite (definite). For $x\in\mathbb{R}$, the function $[x]_+$ is defined as $[x]_+ := \textrm{max}\{0,x\}$.
For a given column vector $x\in\mathbb{R}^n$, we define $\|x\|_1:= \sum_i |x_i|$ and $\|x\|_2 :=\sqrt{x^\top x}$. Finally, $\nabla_xf(x)$ returns the gradient vector of $f(x)$ with respect to $x \in \mathbb{R}^n$.


\section{Modelling and Primary Problems Setup}\label{sec:system_model}

\subsection{Network Modelling}
Consider a distribution network modelled by a directed and connected graph $\mathcal{G}(\mathcal{N}_0,\mathcal{E})$, where $\mathcal{N}_0:= \mathcal{N}\cup\{0\}$ is the set of all ``buses" or ``nodes" with the substation node 0 and $\mathcal{N}:= \{1,\dots,N\}$. The set $\mathcal{E} \subset \mathcal{N}\times\mathcal{N}$ collects ``links" or ``lines" for all $(i,j) \in \mathcal{E}$. Let $V_{i,t} \in\mathbb{C}$ denote the line-to-ground voltage at node $i\in\mathcal{N}$ at time $t$, where the voltage magnitude is given by $v_{i,t}:=|V_{i,t}|$. Let $p_{i,t}\in\mathbb{R}$ and $q_{i,t}\in\mathbb{R}$ denote the active and reactive power injections of the DER at node $i\in\mathcal{N}$ for all $t>0$. We denote $\mathcal{X}_{i,t}$ as the feasible set of the active and reactive power $p_{i,t}$ and $q_{i,t}$ at node $i\in\mathcal{N}$ for all $t>0$. 
For a PV inverter-based DER, the feasible set $\mathcal{X}_{i,t}$ is constructed by the solar energy availability. For other DERs, such as energy storage systems, small-scale diesel generators and variable frequency drives, the set $\mathcal{X}_{i,t}$ can be appropriately modelled to include their physical capacity limits; see \cite{zhou2019online}. Note that the set $\mathcal{X}_{i,t}$ is convex, closed and bounded for all $i\in\mathcal{N}$ over time $t>0$. For future development, we use $\mathcal{X}_t:=\mathcal{X}_{1,t}\times\ldots\times\mathcal{X}_{N,t}$ to denote the Cartesian product of the feasible sets of all DERs. 


The relationships between the voltage, current and net-loads in a distribution network are described by the nonlinear power flow equations (e.g., based on the DistFlow model) as:
\begin{subequations}\label{eq:nonlinear_pf}
    \begin{align}
        P_{ij,t} & = -p_{j,t} + \sum_{k:(j:k)\in\mathcal{E}} P_{jk,t} + r_{ij}\ell_{ij,t}^2,\\
        Q_{ij,t} & = -q_{j,t} + \sum_{k:(j:k)\in\mathcal{E}} Q_{jk,t} + x_{ij}\ell_{ij,t}^2,\\
        v_{j,t}^2 & = v_{i,t}^2 - 2\Big(r_{ij}P_{ij,t} + x_{ij}Q_{ij,t}\Big) + \Big(r_{ij}^2 + x_{ij}^2\Big)\ell_{ij,t}^2,\\
        v_{i,t}^2 \ell_{ij,t}^2 & = P_{ij,t}^2 + Q_{ij,t}^2,
    \end{align}
\end{subequations}
where $P_{ij,t}\in\mathbb{R}$ and $Q_{ij,t}\in\mathbb{R}$ are the real and reactive power flows on line $(i,j)$ at time $t$. We use $k:(j:k) \in \mathcal{E}$ to indicate all the distribution lines $(j,k) \in \mathcal{E}$ connected to bus $j$. The impedance of line $(i,j)\in\mathcal{E}$ is $r_{ij} +\mathbf{j}x_{ij}$. The squared magnitude of the current on line $(i,j)\in\mathcal{E}$ at time $t$ is defined by $\ell_{ij,t}^2 \in \mathbb{R}_{+}$. For convenience, we define vectors $\mathbf{v}_t:=[v_{1,t},\ldots,v_{N,t}]^\top\in\mathbb{R}^N$, $\mathbf{p}_t:=[p_{1,t},\ldots,p_{N,t}]^\top\in\mathbb{R}^N$ and $\mathbf{q}_t:=[q_{1,t},\ldots,q_{N,t}]^\top\in\mathbb{R}^N$.

To formulate a computationally tractable convex optimization problem, we linearize the relationship between voltage magnitudes and nodal power injections as follows: 
\begin{equation}
\label{eq:linear_powerflow}
    \mathbf{v}_t(\mathbf{p}_t,\mathbf{q}_t) = \mathbf{R}\mathbf{p}_t + \mathbf{X}\mathbf{q}_t + \mathbf{v}_0,
\end{equation}
where the parameters $\mathbf{R}\in\mathbb{R}^{N\times N}$, $\mathbf{X}\in\mathbb{R}^{N\times N}$ and $\mathbf{v}_0\in\mathbb{R}^{N}$ can be attained from various linearization methods, e.g., \cite{bolognani2015fast,gan2016online,baran1989network}. For the rest of this paper, we consider the voltage magnitude projection $\mathbf{v}_t(\cdot)$ as a fixed linearization with time-invariant matrices $\mathbf{R}$ and $\mathbf{X}$ for simplicity, although it is straightforward to extend these two matrices to time-varying linearized models. With the model above, we formulate a time-varying OPF problem for voltage regulation and a static WLS voltage estimation problem at each given time $t>0$. An online gradient algorithm with feedback is developed to solve these two optimization problems in parallel.

\subsection{Joint OPF-SE via Primal-Dual Gradient Feedback}
For the application of real-time voltage regulation, we introduce a time-varying OPF problem and a WLS-based SE problem to attain optimal set-points of DERs at given time $t>0$. In the subsequent sections, we first give the formulations for the OPF and the SE problems. Then, a primal-dual gradient algorithm is proposed to 
utilize the state estimation results in the OPF solution at each iteration. 
We next prove the convergence of the proposed algorithm by showing that the equilibrium point of the proposed algorithm is equivalent to the saddle-point dynamics of a single OPF-SE synthesis optimization formulation.

\subsubsection{OPF Problem}
Consider a time-varying OPF problem $(\bm{\mathcal{P}_t^1})$ for voltage regulation:
\begin{subequations}\label{eq:opf}
\begin{eqnarray}
 (\bm{\mathcal{P}^1_t}) & \underset{\mathbf{u}_t}{\min}  & C_t^{\textrm{OPF}}(
\mathbf{u}_t),\\%
&\text{subject to}& \mathbf{r}(\mathbf{v}_t(\mathbf{u}_t)) \leq 0, \label{eq:opf_voltreg_upper_lower}\\
&& \mathbf{u}_t\in\mathcal{X}_t,
\label{eq:opf_feasible_set}
\end{eqnarray}
\end{subequations}
where $\mathbf{u}_t:=[\mathbf{p}_t^\top,\mathbf{q}_t^\top]^\top\in\mathbb{R}^{2N}$ denotes the power set-points of DERs at time $t$. The voltage constraints are given in a compact form, such that $\mathbf{r}(\mathbf{v}_t(\mathbf{u}_t)):= [(\mathbf{v}_t(\mathbf{u}_t) - \mathbf{v}^{\textrm{max}})^\top, (\mathbf{v}^{\textrm{min}} - \mathbf{v}_t(\mathbf{u}_t))^\top]^\top \in \mathbb{R}^{2N}$. The voltage constraint in \eqref{eq:opf_voltreg_upper_lower} utilizes the linearized AC power flow \eqref{eq:linear_powerflow}, where the lower and upper limits are denoted by $\mathbf{v}^{\textrm{min}}\in\mathbb{R}^N$ and $\mathbf{v}^{\textrm{max}}\in\mathbb{R}^N$, respectively. The set-points of DERs at time $t$ are subjected to the convex and compact feasible set $\mathcal{X}_t$. The OPF function $C_t^{\textrm{OPF}}(\cdot):\mathbb{R}^{2N}\to\mathbb{R}$ is a generic time-varying cost objective at time step $t$, capturing the costs of the system operator, e.g., the costs of deviations of the power flow into the substation from its reference values and/or the costs of power production by DERs, including generation costs, ramping costs, the renewable curtailment penalty, the auxiliary service expense and the reactive power compensation.

\subsubsection{Motivation for Involving SE}
Problem~\eqref{eq:opf} is typically solved assuming that all network voltages $\{\mathbf{v}_t\}$ and set-points $\{\mathbf{u}_t\}$ of DERs are available in real-time. However, there is generally a lack of reliable measurements and timely communications in practical distribution networks, hindering effective implementation of conventional OPF approaches which usually assume the availability of all system states. Therefore, a major challenge for solving \eqref{eq:opf} lies in gathering real-time system state information such as net-loads and voltages that can be integrated into an OPF solver. Although the system states of a distribution network are not fully measurable in practice, the distribution network can be fully observable by a well-posed SE problem with pseudo-measurements
for all nodal injections and a limited number of voltage measurements \cite{schweppe1970power}. Hence,
we will tackle this challenge by fusing 
the time-varying SE problem $(\bm{\mathcal{P}_t^2})$ with the OPF problem $(\bm{\mathcal{P}_t^1})$. 

\begin{remark}[Pseudo-measurements]  Due to the lack of real-time measurements and the stochastic nature of power nodal injections in distribution system state estimation, the nodal power injections are measured by their nominal load-pattern (i.e., the real value plus zero-mean random deviations), so-called pseudo-measurement, whose information is derived from the past records of load behaviors \cite{schweppe1970power}. As shown in Lemma \ref{lem:SE_observability} below, having pseudo-measurements of power injections contributes to establishing full observability of the static SE problem. In principle, we can instead use the real-time values of power injections, which however requires an efficient and fast communication structure and a sufficient monitoring system having sensors at all nodes. This requirement currently cannot be satisfied in a large-scale distribution network. Hence, in this paper, we use the historical data of all nodal power injections as pseudo-measurements to tradeoff the accuracy of estimation results and the cost of measurements, which has been observed to be efficient in a static SE problem \cite{dvzafic2013real}. \label{rmk:pseudo-measurements}
\end{remark}


\subsubsection{SE Problem}
System states are a set of variables that can determine the behavior of the entire system, i.e., the power flow equations here. For a distribution system, either voltage phasors of all nodes or nodal real and reactive power injections for all nodes can be chosen as system states. Consider a time-varying measurement model with the true system state vector $\mathbf{z}_t \in \mathbb{R}^{2N}$ and the measurement vector $\mathbf{y}_t \in \mathbb{R}^{m}$ at time $t$. The measurement function is defined as $\mathbf{h}_t(\cdot):\mathbb{R}^{2N}\to\mathbb{R}^{m}$. The measurement noise $\bm{\xi}_t^{\textrm{SE}}\in\mathbb{R}^{m}$ follows a normal probability distribution with zero mean and covariance matrix $\bm{\Sigma}_t\in\mathbb{R}^{m \times m}$,
\begin{equation}\label{eq:state_measurement}
    \mathbf{y}_t = \mathbf{h}_t(\mathbf{z}_t) + \bm{\xi}_t^{\textrm{SE}}.
\end{equation}
Using $\mathbf{W}_t:= (\bm{\Sigma}_t)^{-1}$ as the weighting matrix, a time-varying WLS SE problem is formulated as:
\begin{equation}\nonumber
    \min_{\tilde{\mathbf{z}}_t}~~~ \frac{1}{2}(\mathbf{y}_t-\mathbf{h}_t(\tilde{\mathbf{z}}_t))^\top \mathbf{W}_t(\mathbf{y}_t-\mathbf{h}_t(\tilde{\mathbf{z}}_t)),
\end{equation}
where $\tilde{\mathbf{z}}_t$ denotes the estimation of the true state $\mathbf{z}_t$. In this work,  considering the optimal voltage regulation problem $\bm{\mathcal{P}^1_t}$,  we adopt voltage magnitude measurement at selected nodes as the real-time measurement, together with pseudo-measurement for all load nodes, and
construct the following WLS problem for SE:
\begin{subequations}\label{eq:se}
\begin{eqnarray}
(\bm{\mathcal{P}^2_t})
& \underset{\tilde{\mathbf{z}}_t,\tilde{\mathbf{v}}_t}{\min}& C^{\textrm{SE}}_t\left(\tilde{\mathbf{z}}_t,\tilde{\mathbf{v}}_t \right), \label{eq:se_obj}\\
& \textrm{subject to} & \tilde{\mathbf{v}}_t= \mathbf{v}_t(\tilde{\mathbf{z}}_t),\label{eq:se_linearization_v}
\end{eqnarray}
\end{subequations}
where $\tilde{\mathbf{z}}_t=[\tilde{\mathbf{p}}_t^{\top}, \tilde{\mathbf{q}}_t^{\top}]^{\top}$ collects the estimated active and reactive power injections for all nodes with $\tilde{\mathbf{p}}_t := [\tilde{p}_{1,t},\ldots,\tilde{p}_{N,t}]^\top \in \mathbb{R}^N$ and $\tilde{\mathbf{q}}_t:= [\tilde{q}_{1,t},\ldots,\tilde{q}_{N,t}]^\top \in \mathbb{R}^N$, the estimated voltage magnitudes are denoted by $\tilde{\mathbf{v}}_t:= [\tilde{v}_{1,t},\ldots,\tilde{v}_{N,t}]^\top \in \mathbb{R}^N$. 
The objective function $C^{\textrm{SE}}_t:=\sum_{i\in\mathcal{M}_p}\frac{\left(\hat{p}_{i,t}-\tilde{p}_{i,t}\right)^2 }{2(\sigma^p_{i,t})^2}+\sum_{i\in\mathcal{M}_q}\frac{\left(\hat{q}_{i,t}-\tilde{q}_{i,t} \right)^2}{2(\sigma^q_{i,t})^2} + \sum_{i\in\mathcal{M}_v}\frac{\left(\hat{v}_{i,t}-\tilde{v}_{i,t}\right)^2}{2(\sigma^v_{i,t})^2}$ features the weighted sum of all costs of measurements based on their respective accuracy, where the sets $\mathcal{M}_p$, $\mathcal{M}_q$ and $\mathcal{M}_v$ contain the nodes with pseudo-measurements of the active and reactive power injections, and voltage magnitude measurements, respectively. The vectors $\hat{\mathbf{p}}_t :=\{\hat{p}_{i,t} | \forall i \in \mathcal{M}_p\}$, $\hat{\mathbf{q}}_t :=\{\hat{q}_{i,t} | \forall i \in \mathcal{M}_q\}$ collect the pseudo-measurements of the loads and $\hat{\mathbf{v}}_t := \{\hat{v}_{i,t} | \forall i\in\mathcal{M}_v\}$ gathers the sensor measurements of voltage magnitudes. The noisy voltage magnitude measurements $\hat{v}_i, \forall i\in\mathcal{M}_v$ are assumed to be attained from voltage magnitude measurements with relatively high accuracy. The pseudo-measurements of the active power injections $\hat{p}_{i,t}, \forall i\in\mathcal{M}_p$ and the reactive power injections $\hat{q}_{i,t},
\forall i\in\mathcal{M}_q$ are attained from the historical data assuming large variations. The standard deviations of the measurement errors are denoted by $\sigma_{i,t}^p$, $\sigma_{i,t}^q$ and $\sigma_{i,t}^v$ for active power, reactive power and voltage magnitude, respectively. We assume that measurement errors are independent.
The estimation variable $\tilde{\mathbf{z}}_t$ is subjected to a convex and compact feasible set $\mathcal{X}_t$. Note that the states of a distribution network are uniquely determined by \eqref{eq:nonlinear_pf} given $\mathbf{z}_t$ at any time $t$. We leverage the linearized AC power flow model \eqref{eq:linear_powerflow} to determine the voltage magnitude as $\tilde{\mathbf{v}}_t= \mathbf{v}_t(\mathbf{z}_t)$.

\begin{definition}[Full Observability \cite{wu1985network,abur2004power}] A state-to-output system $\mathbf{y} = \mathbf{h}(\mathbf{z})$ is fully observable\footnote{The observability of the time-varying WLS SE problem should be distinguished from observability of linear dynamical systems. Here, we limit the definition of observability to power system static
state estimation problems \cite{schweppe1970power}.} if $\mathbf{z} = 0$ is the only solution for $\mathbf{h}(\mathbf{z}) = 0$. This condition allows a unique solution to \eqref{eq:se}.
\end{definition}


\begin{lemma}[Sufficient Condition for Full Observability \cite{zhou2019graident}] A sufficient condition for the distribution network $\mathcal{G}$ to be fully observable by \eqref{eq:se} is $\mathcal{M}_p = \mathcal{M}_q = \mathcal{N_L}$, where the set $\mathcal{N}_L$ collects all nodes with power injections.\label{lem:SE_observability}
\end{lemma}

\noindent For the rest of this paper, we assume to have pseudo-measurements of all power injections to guarantee full observability. This setting is easy to be satisfied in practice and has been effective to obtain high accuracy estimation results in \cite{dvzafic2013real}. For more observability analysis results, numerical studies and discussions, we refer the reader to our previous works \cite{zhou2019graident}. Optimally deploying the voltage sensors for accuracy improvement is out of the scope of this paper but it is an interesting topic to investigate in future work. 


\subsubsection{Joint OPF-SE Algorithm}
To solve problems $(\bm{\mathcal{P}^1_t})$, 
we consider the regularized Lagrangian of \eqref{eq:opf}:
\begin{equation}\label{eq:l_opf}
\begin{aligned}
    \mathcal{L}^{\textrm{OPF}}_t(\mathbf{u}_t,\bm{\mu}_t) & = C^{\textrm{OPF}}_t(\mathbf{u}_t) + \bm{\mu}_t^\top\mathbf{r}(\mathbf{v}_t(\mathbf{u}_t)) - \frac{\phi}{2}\|\bm{\mu}_t\|_2^2,
\end{aligned}
\end{equation}
where $\bm{\mu}_t \in \mathbb{R}^{2N}$ is the vector of Langrange multipliers associated with constraint \eqref{eq:opf_voltreg_upper_lower}. The Lagrangian \eqref{eq:l_opf} includes a Tikhonov regularization term $-\frac{\phi}{2}\|\bm{\mu}_t\|_2^2$ with a small constant $\phi >0$. This regularization term facilitates the convergence of the primal-dual algorithm to the solution of the saddle-point problem:
\begin{equation}\label{eq:saddle_point_static_opf}
\max_{\bm{\mu}_t\in\mathbb{R}^{2N}} \min_{\mathbf{u}_t\in\mathcal{X}_t} \mathcal{L}^{\textrm{OPF}}_t(\mathbf{u}_t,\bm{\mu}_t),
\end{equation}
which is an approximate optimal solution of the original problem \eqref{eq:opf}. The difference between the solutions of the original problem and the regularized problem was characterized in \cite{koshal2011multiuser}. We first use the primal-dual gradient method to solve the saddle-point problem \eqref{eq:saddle_point_static_opf} as the existing results \cite{zhou2019accelerated,dall2016optimal}:
\begin{subequations}\label{eq:algorithm_opf_t}
\begin{align}
\mathbf{u}_t^{k+1} & =  \Bigg[\mathbf{u}_t^k - \epsilon\bigg( \nabla_{\mathbf{u}}C_t^{\textrm{OPF}}(\mathbf{u}_t^k) + \nabla_{\mathbf{u}}\mathbf{r}(\mathbf{v}_t(\mathbf{\mathbf{u}}_t^k))^\top\bm{\mu}_t^k\bigg)\Bigg]_{\mathcal{X}_t},\label{eq:equilibrium_p_opf}\\
\bm{\mu}_t^{k+1} & = \Bigg[ \bm{\mu}_t^k + \epsilon\bigg(\mathbf{r}(\hat{\mathbf{v}}_t^k) - \phi\bm{\mu}_t^k\bigg) \Bigg]_{\mathbb{R}_+},\label{eq:static_equilibrium_mu_lower_opf}\\
\hat{\mathbf{v}}_t^{k+1} & \longleftarrow \textrm{based on the voltage magnitude sensors}. \label{eq:voltage_measurement_feedback}
\end{align}
\end{subequations}
where the operator $[\cdot]_{\mathcal{X}_t}$ projects onto the feasible region $\mathcal{X}_t$ and the operator $[\cdot]_{\mathbb{R}_+}$ projects onto the nonnegative orthant. This classic feedback-based gradient approach assumes that all voltage magnitudes $\hat{\mathbf{v}}_t$ are measurable \eqref{eq:voltage_measurement_feedback} for dual updates at every iteration \eqref{eq:static_equilibrium_mu_lower_opf}. However, in practice, this design, particularly \eqref{eq:voltage_measurement_feedback}, heavily depends on reliable real-time measurement devices, which is not well-deployed in the current distribution networks. To tackle this challenge for iteratively solving \eqref{eq:saddle_point_static_opf} in a fast time-scale, we introduce a SE feedback loop as shown in \eqref{eq:equilibrium_p_se} below. The gradient updates of the SE problem \eqref{eq:se} timely feedback the estimates of voltage magnitudes $\mathbf{v_t}(\mathbf{z}_t^k)$ into dual updates in parallel with every OPF updates with a limited number of sensor measurements. The proposed jointly OPF-SE algorithm is shown as follow:
\begin{subequations}\label{eq:algorithm_joint_t}
\begin{align}
\mathbf{u}_t^{k+1} & =  \Bigg[\mathbf{u}_t^k - \epsilon\bigg( \nabla_{\mathbf{u}}C_t^{\textrm{OPF}}(\mathbf{u}_t^k) + \nabla_{\mathbf{u}}\mathbf{r}(\mathbf{v}_t(\mathbf{\mathbf{u}}_t^k))^\top\bm{\mu}_t^k\bigg)\Bigg]_{\mathcal{X}_t},\label{eq:equilibrium_p}\\
\bm{\mu}_t^{k+1} & = \Bigg[ \bm{\mu}_t^k + \epsilon\bigg(\mathbf{r}(\mathbf{v}_t(\mathbf{z}_t^k)) - \phi\bm{\mu}_t^k\bigg) \Bigg]_{\mathbb{R}_+},\label{eq:static_equilibrium_mu_lower}\\
\mathbf{z}_t^{k+1} & =  \mathbf{z}_t^k - \epsilon \nabla_{\mathbf{z}}C_t^{\textrm{SE}}(\mathbf{z}_t^k).\label{eq:equilibrium_p_se} 
\end{align}
\end{subequations}

Using \eqref{eq:se_linearization_v}, we can rewrite the state estimation objective $C^{\textrm{SE}}_t\left(\mathbf{z}_t,\tilde{\mathbf{v}}_t \right)$ in \eqref{eq:se_obj} as $C^{\textrm{SE}}_t\left(\mathbf{z}_t, \mathbf{v}_t(\mathbf{z}_t)\right)$, abbreviated as $C^{\textrm{SE}}_t\left(\mathbf{z}_t\right)$ in \eqref{eq:equilibrium_p_se}.
In each iteration $k$, the OPF commands---the primal variables---are updated once on the end-user side \eqref{eq:equilibrium_p} and the dual variables are updated in \eqref{eq:static_equilibrium_mu_lower} based on the updated estimated voltages $\mathbf{v}_t(\mathbf{z}_t^k)$. Next, the state estimations are updated on the operator side \eqref{eq:equilibrium_p_se}. 
By this design, we iteratively connect optimization and estimation tasks in a loop for every update, 
as illustrated in Fig.~\ref{fig:OPF_SE_diagram_1}. Key to this end is to notice that the iterative updates \eqref{eq:equilibrium_p_se} and feedback loop \eqref{eq:static_equilibrium_mu_lower} of state information enable the OPF updates to have the timely tracking of the fast-changing network states at every iteration. This point is particularly important because SE updates \eqref{eq:equilibrium_p_se} can be conceivably performed at a fast time-scale comparing to completely solving the time-varying SE problem \eqref{eq:se}, where the estimation results might be suboptimal once the computation is complete due to the network response from OPF iterative updates.

\begin{figure}[!htbp]
\centering
\includegraphics[width=3.5in]{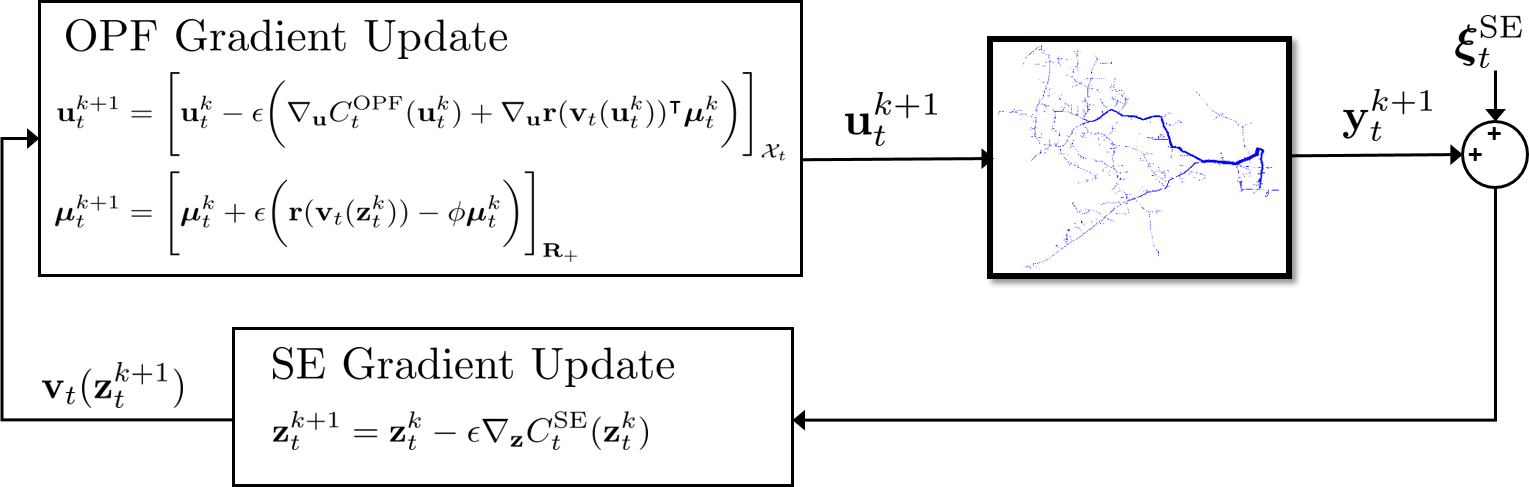}
\caption{Online joint optimization-estimation
architecture in distribution networks.}
\label{fig:OPF_SE_diagram_1}
\end{figure}


\subsection{Convergence Analysis}
We now show the convergence of the joint OPF-SE Algorithm \eqref{eq:algorithm_joint_t} under the following assumptions:
\begin{assumption}[Slater's Condition]
\label{ass:slater_deterministic} For every $t>0$, there exists a strictly feasible point $\mathbf{u}_t \in \mathcal{X}_t$, so that 
\begin{equation}\nonumber
    \mathbf{r}(\mathbf{v}_t(\mathbf{u}_t)) \leq 0.
\end{equation}
\end{assumption}
\noindent The Slater's condition here guarantees strong duality and the feasibility of $\bm{(\mathcal{P}_t^1)}$ given any particular time step $t>0$. This implies that the OPF problem \eqref{eq:opf} is well-posed with proper prescribed voltage limits, which holds for a well-design distribution network. Note that the refined Slater's condition is applied here for the above affine voltage constraints without strict inequality.

\begin{assumption}\label{ass:C_OPF}
For every $t>0$, the objective function $C_t^{\textrm{OPF}}(\cdot)$ is continuously differentiable and strongly convex. 
Its first order derivative is bounded 
on the set $\mathcal{X}_t$.
\end{assumption}
\noindent Assumption \ref{ass:C_OPF} holds for the DERs with convex quadratic cost functions in practice, so that their first order derivatives are naturally bounded in the feasible set $\mathcal{X}_t$ due to the physical limits.
\noindent The primal-dual gradient updates \eqref{eq:algorithm_joint_t} are re-written as:
\begin{equation}\nonumber
\begin{bmatrix}
\mathbf{u}_t^{k+1}\\
\bm{\mu}_t^{k+1}\\
\mathbf{z}_t^{k+1}
\end{bmatrix} = \begin{bmatrix}
\begin{bmatrix}
\mathbf{u}_t^k\\
\bm{\mu}_t^k\\
\mathbf{z}_t^k
\end{bmatrix} - 
\epsilon\begin{bmatrix}
\nabla_{\mathbf{u}}\mathcal{L}_t^{\textrm{OPF}}(\mathbf{u}_t^k,\bm{\mu}_t^k)\\
-\nabla_{\bm{\mu}}\mathcal{L}_t^{\textrm{OPF}}(\mathbf{v}_t(\mathbf{z}_t^k),\bm{\mu}_t^k)\\
\nabla_{\mathbf{z}}C_t^{\textrm{SE}}(\mathbf{z}_t^k)
\end{bmatrix}
\end{bmatrix}_{\mathcal{X}_t\times\mathbb{R}_+\times\mathbb{R}}.
\end{equation}
For convenience, we define the gradient operator:
\begin{equation}\label{eq:F_operator}
    \mathcal{F}_t(\mathbf{u}_t,\bm{\mu}_t,\mathbf{z}_t):= \begin{bmatrix}
\nabla_{\mathbf{u}}\mathcal{L}^{\textrm{OPF}}_t(\mathbf{u}_t,\bm{\mu}_t)\\
-\nabla_{\bm{\mu}}\mathcal{L}^{\textrm{OPF}}_t(\mathbf{v}_t(\mathbf{z}_t),\bm{\mu}_t)\\
\nabla_{\mathbf{z}}C^{\textrm{SE}}_t(\mathbf{z}_t)
\end{bmatrix}.
\end{equation}

\begin{lemma}\label{lemma:strongly_monotone}
The gradient operator $\mathcal{F}_t(\mathbf{u}_t,\bm{\mu}_t,\mathbf{z}_t)$ is strongly monotone for all $t>0$.
\end{lemma}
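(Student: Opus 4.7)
The plan is to verify strong monotonicity by a direct block-wise expansion of $(\mathcal{F}_t(\mathbf{w}_1) - \mathcal{F}_t(\mathbf{w}_2))^\top(\mathbf{w}_1 - \mathbf{w}_2)$ and then to absorb the only non-quadratic cross term via Young's inequality, exploiting the affine structure of the linearized voltage model. Because $\mathbf{v}_t(\mathbf{u}) = [\mathbf{R},\mathbf{X}]\mathbf{u} + \mathbf{v}_0$ in \eqref{eq:linear_powerflow} and $\mathbf{r}(\cdot)$ is affine in its argument, there exist a constant matrix $\mathbf{M}$ (stacking $[\mathbf{R},\mathbf{X}]$ above its negative) and a constant offset $\mathbf{b}$ with $\mathbf{r}(\mathbf{v}_t(\mathbf{u})) = \mathbf{M}\mathbf{u} + \mathbf{b}$, $\mathbf{r}(\mathbf{v}_t(\mathbf{z})) = \mathbf{M}\mathbf{z} + \mathbf{b}$, and $\nabla_{\mathbf{u}}\mathbf{r}(\mathbf{v}_t(\mathbf{u}))^\top \equiv \mathbf{M}^\top$. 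All remaining nonlinearities in $\mathcal{F}_t$ then sit in $\nabla C_t^{\textrm{OPF}}$ and $\nabla C_t^{\textrm{SE}}$.

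Writing $\mathbf{w}_j := (\mathbf{u}_j,\bm{\mu}_j,\mathbf{z}_j)$ and $\Delta\mathbf{u},\Delta\bm{\mu},\Delta\mathbf{z}$ for the corresponding differences, I expand the three blocks of \eqref{eq:F_operator} and collect like terms to obtain
\begin{align*}
\langle \mathcal{F}_t(\mathbf{w}_1) - \mathcal{F}_t(\mathbf{w}_2),\, \mathbf{w}_1 - \mathbf{w}_2\rangle
&= \bigl(\nabla C_t^{\textrm{OPF}}(\mathbf{u}_1) - \nabla C_t^{\textrm{OPF}}(\mathbf{u}_2)\bigr)^{\!\top}\!\Delta\mathbf{u} + \phi\|\Delta\bm{\mu}\|_2^2 \\
&\quad + \bigl(\nabla C_t^{\textrm{SE}}(\mathbf{z}_1) - \nabla C_t^{\textrm{SE}}(\mathbf{z}_2)\bigr)^{\!\top}\!\Delta\mathbf{z} + (\Delta\bm{\mu})^{\!\top}\mathbf{M}(\Delta\mathbf{u} - \Delta\mathbf{z}).
\end{align*}
Assumption \ref{ass:C_OPF} then yields $\bigl(\nabla C_t^{\textrm{OPF}}(\mathbf{u}_1) - \nabla C_t^{\textrm{OPF}}(\mathbf{u}_2)\bigr)^{\!\top}\Delta\mathbf{u} \geq \sigma_C\|\Delta\mathbf{u}\|_2^2$ for some $\sigma_C > 0$. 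For the SE block, Assumption \ref{ass:SE_observability} together with the stated pseudo-measurement coverage $|\mathcal{M}_p|=|\mathcal{M}_q|=N$ forces the Hessian of $C_t^{\textrm{SE}}$ to dominate the positive-definite diagonal block $\mathrm{diag}(1/(\sigma_{i,t}^p)^2,\, 1/(\sigma_{i,t}^q)^2)$, so $C_t^{\textrm{SE}}$ is strongly convex with some constant $\sigma_{SE} > 0$.

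The crux is the residual cross term $(\Delta\bm{\mu})^{\!\top}\mathbf{M}(\Delta\mathbf{u}-\Delta\mathbf{z})$, which would cancel automatically in a classical saddle-point operator but survives here precisely because the dual gradient in \eqref{eq:static_equilibrium_mu_lower} is evaluated at the estimate $\mathbf{z}_t$ rather than at the set-point $\mathbf{u}_t$---the very coupling that defines the joint OPF--SE architecture. Using $\|\Delta\mathbf{u}-\Delta\mathbf{z}\|_2^2 \leq 2\|\Delta\mathbf{u}\|_2^2 + 2\|\Delta\mathbf{z}\|_2^2$ and Young's inequality with parameter $\lambda > 0$ gives
$$(\Delta\bm{\mu})^{\!\top}\mathbf{M}(\Delta\mathbf{u}-\Delta\mathbf{z}) \geq -\tfrac{\lambda}{2}\|\Delta\bm{\mu}\|_2^2 - \tfrac{\|\mathbf{M}\|_2^2}{\lambda}\bigl(\|\Delta\mathbf{u}\|_2^2 + \|\Delta\mathbf{z}\|_2^2\bigr),$$
which I substitute back in. Choosing any $\lambda \in \bigl(\|\mathbf{M}\|_2^2/\min(\sigma_C,\sigma_{SE}),\, 2\phi\bigr)$---an interval that is nonempty whenever $\phi$ exceeds $\|\mathbf{M}\|_2^2/(2\min(\sigma_C,\sigma_{SE}))$---makes the three diagonal coefficients $\sigma_C - \|\mathbf{M}\|_2^2/\lambda$, $\phi - \lambda/2$, and $\sigma_{SE} - \|\mathbf{M}\|_2^2/\lambda$ strictly positive, and strong monotonicity with modulus equal to their minimum follows. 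The main obstacle is exactly this step: the asymmetric dual-primal coupling denies the cancellation enjoyed by classical primal-dual operators, so strong monotonicity cannot be read off ``for free'' from the saddle-point structure and must be enforced through a lower bound on $\phi$ relative to the spectrum of the sensitivity matrices $\mathbf{R},\mathbf{X}$; I would expect this bound to be either explicit in the authors' statement or implicit in their choice of $\phi$.
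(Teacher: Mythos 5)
Your block-wise expansion is correct, and you have isolated exactly the right difficulty: the residual term $(\Delta\bm{\mu})^{\top}\mathbf{M}(\Delta\mathbf{u}-\Delta\mathbf{z})$ survives because the dual gradient in \eqref{eq:static_equilibrium_mu_lower} is evaluated at the estimate $\mathbf{z}$ while the primal block carries $\mathbf{M}^{\top}\bm{\mu}$ against $\mathbf{u}$. The paper, however, disposes of this term by a different device. It splits $\mathcal{F}_t$ into the block-diagonal gradient of the three strongly convex pieces ($C_t^{\textrm{OPF}}$, $\tfrac{\phi}{2}\|\bm{\mu}\|_2^2$, $C_t^{\textrm{SE}}$) plus an affine coupling, and then invokes the identity $\mathbf{v}_t(\mathbf{u}_t^k)=\mathbf{v}_t(\mathbf{z}_t^k)$ — asserted to hold at every gradient step of \eqref{eq:algorithm_joint_t} — to substitute $\mathbf{H}\mathbf{u}$ for $\mathbf{H}\mathbf{z}$ in the dual block; on that consistency set your cross term is identically zero (the coupling collapses to the classical skew-symmetric saddle structure), so strong monotonicity follows with no lower bound on $\phi$, which is why the paper can keep $\phi$ ``small.'' Your Young's-inequality route is the honest alternative that makes no assumption on the iterates: it yields strong monotonicity on the whole space $\mathcal{X}_t\times\mathbb{R}_+^{2N}\times\mathbb{R}^{2N}$, but only under $\phi>\|\mathbf{M}\|_2^2/\bigl(2\min(\sigma_C,\sigma_{SE})\bigr)$, a condition absent from the lemma and in tension with the paper's small-$\phi$ regime (indeed, for small $\phi$ the quadratic form in $(\Delta\bm{\mu},\Delta\mathbf{z})$ is genuinely indefinite, so the unconditional statement fails off the consistency set). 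The two proofs therefore trade hypotheses: the paper buys the unconditional claim at the price of the feedback-consistency identity, which is itself delicate since $\mathbf{u}^k$ and $\mathbf{z}^k$ follow different updates and need not stay voltage-consistent during transients, while you buy validity everywhere at the price of a lower bound on the regularization weight. Your closing suspicion that such a bound should appear in the statement is well founded — it does not, and its absence is covered only by the paper's appeal to that identity. Your justification of $\sigma_{SE}>0$ via the full pseudo-measurement coverage $|\mathcal{M}_p|=|\mathcal{M}_q|=N$ underlying Assumption \ref{ass:SE_observability} is exactly the right reason the SE block is strongly convex.
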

\begin{proof}
See Appendix \ref{appendix_lemma_monotone}.
\end{proof}
By Lemma \ref{lemma:strongly_monotone}, there exists some constants $\widehat{M}>0$, such that for any $\hat{\mathbf{e}}_t:=[\mathbf{u}_t^\top,\bm{\mu}_t^\top,\mathbf{z}_t^\top]^\top$ and $\hat{\mathbf{e}}_t':=[(\mathbf{u}_t')^\top,(\bm{\mu}_t')^\top,(\mathbf{z}_t')^\top]^\top$: 
\begin{equation}\label{eq:strongly_monotone_F}
    \left(\mathcal{F}_t(\hat{\mathbf{e}}_t) - \mathcal{F}_t(\hat{\mathbf{e}}_t' )\right)^\top  \left(\hat{\mathbf{e}}_t - \hat{\mathbf{e}}_t'\right) \geq \widehat{M}\|\hat{\mathbf{e}}_t - \hat{\mathbf{e}}_t'\|_2^2.
\end{equation}
In addition, the operator $\mathcal{F}_t(\cdot)$ is Lipschitz continuous with some constants $\widehat{L}>0$
under Assumption \ref{ass:C_OPF}, such that for any $\mathbf{\hat{e}}_t, \mathbf{\hat{e}}_t' \in \mathcal{X}_t\times\mathcal{X}_t\times\mathbb{R}_+$, we have:
\begin{equation}\label{eq:Lipschitz_continuous_F}
    \|\mathcal{F}_t(\hat{\mathbf{e}}_t) - \mathcal{F}_t(\hat{\mathbf{e}}_t')\|_2^2 \leq \widehat{L}^2\|\hat{\mathbf{e}}_t - \hat{\mathbf{e}}_t'\|_2^2.
\end{equation}
Given the results above, the convergence of  \eqref{eq:algorithm_joint_t} can be established with a small enough step-size for gradient updates.
\begin{lemma}
\label{lem:stepsize}
Suppose Assumptions \ref{ass:slater_deterministic} and \ref{ass:C_OPF} hold. For any step-size that satisfies:
\begin{equation}
    0 < \epsilon < 2\widehat{M}/\widehat{L}^2,
\end{equation}
the proposed primal-dual gradient algorithm \eqref{eq:algorithm_joint_t}  exponentially converges to the unique saddle point $(\mathbf{u}_t^*,\bm{\mu}_t^*)$ of the saddle point problem \eqref{eq:saddle_point_static_opf} and the unique optimal $\mathbf{z}_t^*$ of the static state estimation problem \eqref{eq:se} for any given time $t>0$.
\end{lemma}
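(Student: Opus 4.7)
The plan is to exploit the standard contraction argument for projected-gradient iterations on a strongly monotone, Lipschitz operator, once the three-block update \eqref{eq:algorithm_joint_t} is rewritten in the compact form $\hat{\mathbf{e}}_t^{k+1} = [\hat{\mathbf{e}}_t^k - \epsilon\,\mathcal{F}_t(\hat{\mathbf{e}}_t^k)]_{\mathcal{C}}$, where $\mathcal{C}:=\mathcal{X}_t\times\mathbb{R}_+\times\mathcal{X}_t$ is closed and convex and $\mathcal{F}_t$ is the operator defined in \eqref{eq:F_operator}. Existence and uniqueness of the fixed point $\hat{\mathbf{e}}_t^*=(\mathbf{u}_t^*,\bm{\mu}_t^*,\mathbf{z}_t^*)$ follow immediately: strong monotonicity of $\mathcal{F}_t$ (Lemma~\ref{lemma:strongly_monotone}) on the closed convex $\mathcal{C}$, together with Slater's condition (Assumption~\ref{ass:slater_deterministic}) for the OPF block and the observability condition (Assumption~\ref{ass:SE_observability}) for the SE block, guarantees a unique variational inequality solution, which in turn is the unique saddle point of $\mathcal{L}_t^{\textrm{OPF}}$ and the unique minimizer of $C_t^{\textrm{SE}}$ via the KKT conditions of each subproblem.

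Next I would establish contraction around this fixed point. Using non-expansiveness of the Euclidean projection onto $\mathcal{C}$ and the fact that $\hat{\mathbf{e}}_t^* = [\hat{\mathbf{e}}_t^* - \epsilon\,\mathcal{F}_t(\hat{\mathbf{e}}_t^*)]_{\mathcal{C}}$, I write
\begin{equation}\nonumber
\|\hat{\mathbf{e}}_t^{k+1}-\hat{\mathbf{e}}_t^*\|_2^2 \leq \|\hat{\mathbf{e}}_t^k-\hat{\mathbf{e}}_t^* - \epsilon\bigl(\mathcal{F}_t(\hat{\mathbf{e}}_t^k)-\mathcal{F}_t(\hat{\mathbf{e}}_t^*)\bigr)\|_2^2,
\end{equation}
expand the right-hand side, then invoke the strong monotonicity bound \eqref{eq:strongly_monotone_F} on the cross term and the Lipschitz bound \eqref{eq:Lipschitz_continuous_F} on the quadratic term. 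This yields
\begin{equation}\nonumber
\|\hat{\mathbf{e}}_t^{k+1}-\hat{\mathbf{e}}_t^*\|_2^2 \leq (1-2\epsilon\widehat{M}+\epsilon^2\widehat{L}^2)\,\|\hat{\mathbf{e}}_t^k-\hat{\mathbf{e}}_t^*\|_2^2.
\end{equation}
The contraction factor $\rho(\epsilon):=1-2\epsilon\widehat{M}+\epsilon^2\widehat{L}^2$ is strictly less than $1$ precisely when $0<\epsilon<2\widehat{M}/\widehat{L}^2$, so iterating gives $\|\hat{\mathbf{e}}_t^k-\hat{\mathbf{e}}_t^*\|_2 \leq \rho(\epsilon)^{k/2}\|\hat{\mathbf{e}}_t^0-\hat{\mathbf{e}}_t^*\|_2 \to 0$, i.e., geometric convergence to the unique fixed point.

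Finally I would identify the limit with the statement of the lemma: the primal-dual block of $\hat{\mathbf{e}}_t^*$ satisfies the KKT stationarity conditions of \eqref{eq:saddle_point_static_opf}, hence is its unique saddle point; the $\mathbf{z}_t$-block satisfies $\nabla_{\mathbf{z}}C_t^{\textrm{SE}}(\mathbf{z}_t^*)=0$ on $\mathbb{R}^{2N}$ (the projection there is the identity), hence by Assumption~\ref{ass:SE_observability} is the unique optimal solution of \eqref{eq:se}. The only nontrivial ingredient, and the step I expect to need the most care, is checking that Lemma~\ref{lemma:strongly_monotone} and the Lipschitz estimate \eqref{eq:Lipschitz_continuous_F} actually hold on the whole search space rather than only on a bounded sub-level set; this is where Assumption~\ref{ass:C_OPF} (bounded gradient of $C_t^{\textrm{OPF}}$ on $\mathcal{X}_t$), the linearity of $\mathbf{v}_t(\cdot)$ from \eqref{eq:linear_powerflow}, and the quadratic form of $C_t^{\textrm{SE}}$ are used in concert with the Tikhonov regularizer $-\tfrac{\phi}{2}\|\bm{\mu}_t\|_2^2$ to obtain uniform constants $\widehat{M}$ and $\widehat{L}$ that make the step-size condition $\epsilon<2\widehat{M}/\widehat{L}^2$ meaningful and non-vacuous.
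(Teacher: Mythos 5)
Your proposal is correct and follows essentially the same route as the paper: the paper's proof simply invokes the strong monotonicity \eqref{eq:strongly_monotone_F} and Lipschitz continuity \eqref{eq:Lipschitz_continuous_F} of $\mathcal{F}_t$ and defers to the standard projected-gradient contraction argument in the cited references, which is exactly the argument you write out (and which the paper itself carries out explicitly, with the same contraction factor $1-2\epsilon\widehat{M}+\epsilon^2\widehat{L}^2$, in its proof of Theorem~\ref{thm:online_convergence}). Your additional care about identifying the limit with the saddle point and the SE optimum, and about the uniformity of the constants, only makes the sketch more complete than the paper's.
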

\begin{proof}
The proof follows closely along the line of the analysis in \cite{zhou2019accelerated,guo2020solving} by noting the strongly monotone \eqref{eq:strongly_monotone_F} and the Lipschitz continuity \eqref{eq:Lipschitz_continuous_F} properties of the operator $\mathcal{F}_t(
\cdot)$. In particular, the additionally parallel SE gradient update $\nabla_{\mathbf{z}}C_t^{\textrm{SE}}(\mathbf{z}_t^k)$ in \eqref{eq:F_operator} compared to previous operators in \cite{zhou2019accelerated,guo2020solving} does not change the properties of Lipschitz continuity and strong monotonicity.
\end{proof}
\noindent Note that the convergence and optimality results of the proposed joint OPF-SE algorithm in this section hold for a particular time step in a static situation. In the following section, we will explore its convergence performance under a time-varying setting.

\subsection{Optimality Analysis}\label{sec:reverse_engineering}
We now take a new perspective to interpret the dynamics \eqref{eq:algorithm_joint_t} as a primal-dual algorithm to solve an optimization problem that unifies OPF and SE. 
To be specific, the trajectory of  \eqref{eq:algorithm_joint_t} 
approaches an optimal solution to the following problem: 
\begin{subequations}\label{eq:opf_se}
\begin{eqnarray}
(\bm{\mathcal{P}^3_t}) &\underset{ \mathbf{u}_t,\mathbf{z}_t,\mathbf{v}_t}{\min} & C^{\textrm{OPF}}_t(\mathbf{u}_t) +  C^{\textrm{SE}}_t(\mathbf{z}_t,\mathbf{v}_t(\mathbf{z}_t)),\\
&\textrm{subject to}&\mathbf{r}(\mathbf{v}_t(\mathbf{u}_t)) \leq 0,\label{eq:p3_voltage_constraint_upper}\\
& & \mathbf{v}_t(\mathbf{u}_t) = \mathbf{v}_t(\mathbf{z}_t),\label{eq:p3_se}\\
& & \mathbf{u}_t\in\mathcal{X}_t.
\end{eqnarray}
\end{subequations}
The objective function of $(\bm{\mathcal{P}_t^3})$ includes operational costs  $C_t^\textrm{OPF}(\cdot)$ of controllable DERs and the weighted square errors $C_t^{\textrm{SE}}(\cdot)$ for the state estimation. Assumptions \ref{ass:slater_deterministic}--\ref{ass:C_OPF} straightforwardly hold for problem $(\bm{\mathcal{P}_t^3})$. By including the additional equality constraint $\mathbf{v}_t(\mathbf{u}_t) = \mathbf{v}(\mathbf{z}_t)$, the optimal set-points $\mathbf{u}_t$ of DERs are based on the estimation $\mathbf{z}_t$ to satisfy the targeted voltage regulation. 
%
\begin{theorem}\label{thm:reverse_engineering}
At any given $t>0$, the dynamics \eqref{eq:algorithm_joint_t} serve as a primal-dual gradient algorithm to solve the saddle-point problem of \eqref{eq:opf_se} with Tikhonov regularization terms\footnote{The Tikhonov regularization terms facilitate the convergence performance and ensure that the saddle-point problem is strongly convex on primal variables and strongly concave on dual variables. Note that the equilibrium of the dynamics \eqref{eq:algorithm_joint_t} only approximately solves the original optimization problem \eqref{eq:opf_se} due to the additional regularization terms. The deviation of the approximate solution from the optimal solution is quantified in \cite{koshal2009distributed}.} on the dual variables in the Lagrangian.
\end{theorem}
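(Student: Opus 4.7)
The plan is to construct a regularized Lagrangian for $(\bm{\mathcal{P}_t^3})$ whose projected primal-dual gradient recursion matches \eqref{eq:algorithm_joint_t} term by term, and then appeal to Lemma~\ref{lem:stepsize} to conclude convergence to its unique saddle point. Associating a nonnegative multiplier $\bm{\mu}_t\in\mathbb{R}_+^{2N}$ with the inequality \eqref{eq:p3_voltage_constraint_upper} and an unconstrained multiplier $\bm{\nu}_t\in\mathbb{R}^{N}$ with the equality \eqref{eq:p3_se}, I would introduce
\begin{equation*}
\mathcal{L}_t^{\textrm{joint}}(\mathbf{u}_t,\mathbf{z}_t,\bm{\mu}_t,\bm{\nu}_t) = C_t^{\textrm{OPF}}(\mathbf{u}_t) + C_t^{\textrm{SE}}(\mathbf{z}_t) + \bm{\mu}_t^{\top}\mathbf{r}(\mathbf{v}_t(\mathbf{u}_t)) + \bm{\nu}_t^{\top}\big(\mathbf{v}_t(\mathbf{u}_t)-\mathbf{v}_t(\mathbf{z}_t)\big) - \tfrac{\phi}{2}\|\bm{\mu}_t\|_2^{2},
\end{equation*}
which mirrors the Tikhonov regularization already used in $\mathcal{L}_t^{\textrm{OPF}}$ of \eqref{eq:l_opf}. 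The saddle-point problem is $\max_{\bm{\mu}_t\geq 0,\bm{\nu}_t}\min_{\mathbf{u}_t\in\mathcal{X}_t,\mathbf{z}_t}\mathcal{L}_t^{\textrm{joint}}$, which inherits a unique solution from the strong convexity--concavity induced by Assumption~\ref{ass:C_OPF} and the dual regularizer, together with Slater's condition (Assumption~\ref{ass:slater_deterministic}) and observability (Assumption~\ref{ass:SE_observability}).

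Next, I would take partial gradients of $\mathcal{L}_t^{\textrm{joint}}$ and write the standard projected primal-dual iteration. The $\mathbf{u}$-gradient reproduces \eqref{eq:equilibrium_p} once the $\bm{\nu}$-coupling term $\nabla_{\mathbf{u}}\mathbf{v}_t(\mathbf{u}_t)^{\top}\bm{\nu}_t$ is zero; the $\bm{\mu}$-gradient reads $\mathbf{r}(\mathbf{v}_t(\mathbf{u}_t^{k}))-\phi\bm{\mu}_t^{k}$, which agrees with \eqref{eq:static_equilibrium_mu_lower} after using the equality constraint \eqref{eq:p3_se} to substitute $\mathbf{r}(\mathbf{v}_t(\mathbf{z}_t^{k}))$ for $\mathbf{r}(\mathbf{v}_t(\mathbf{u}_t^{k}))$; and the $\mathbf{z}$-gradient reduces to \eqref{eq:equilibrium_p_se} once the $\bm{\nu}$-coupling term $\nabla_{\mathbf{z}}\mathbf{v}_t(\mathbf{z}_t)^{\top}\bm{\nu}_t$ vanishes. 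Setting $\bm{\nu}_t\equiv 0$ in the saddle-point recursion---which is consistent with the KKT system at a point where $\mathbf{v}_t(\mathbf{u}_t^{\star})=\mathbf{v}_t(\mathbf{z}_t^{\star})$ holds---collapses the four-block iteration onto the three-block algorithm \eqref{eq:algorithm_joint_t}. In this way, the SE feedback loop in \eqref{eq:equilibrium_p_se} plays the role of an online estimator supplying the controlled voltage that would otherwise be required in the $\bm{\mu}$-update.

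Finally, I would invoke the earlier analysis: by Lemma~\ref{lemma:strongly_monotone} the combined gradient operator $\mathcal{F}_t$ in \eqref{eq:F_operator} is strongly monotone and Lipschitz, and by Lemma~\ref{lem:stepsize} the recursion \eqref{eq:algorithm_joint_t} converges to a unique fixed point $(\mathbf{u}_t^{\star},\bm{\mu}_t^{\star},\mathbf{z}_t^{\star})$. By the correspondence established above, this fixed point---together with $\bm{\nu}_t^{\star}=0$---is the unique saddle point of $\mathcal{L}_t^{\textrm{joint}}$, and its deviation from the unregularized optimum of \eqref{eq:opf_se} is of the standard $O(\phi)$ order quantified in \cite{koshal2009distributed}. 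The main obstacle I expect is off-equilibrium bookkeeping of the SE-feedback substitution: during the iterations one generally has $\mathbf{v}_t(\mathbf{u}_t^{k})\neq\mathbf{v}_t(\mathbf{z}_t^{k})$, so the equivalence between the four-block saddle-point recursion and the three-block algorithm \eqref{eq:algorithm_joint_t} has to be argued through the strong monotonicity of $\mathcal{F}_t$ rather than pointwise KKT-matching along the trajectory.
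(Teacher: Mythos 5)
Your proposal follows essentially the same route as the paper: form a dual-regularized Lagrangian of $(\bm{\mathcal{P}_t^3})$, match its saddle-point optimality conditions with the fixed point of \eqref{eq:algorithm_joint_t}, and invoke the strong-monotonicity and step-size lemmas for convergence. The only cosmetic difference is that you dualize the equality \eqref{eq:p3_se} with a multiplier $\bm{\nu}_t$ and then argue it vanishes, whereas the paper never dualizes that constraint and instead enforces it through the asserted feedback identity $\mathbf{v}_t(\mathbf{u}_t^k)=\mathbf{v}_t(\mathbf{z}_t^k)$ at every iteration --- the same identity your ``off-equilibrium bookkeeping'' caveat correctly points at as the delicate step.
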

\begin{proof}
See Appendix \ref{appendix_reverse_engineering}.
\end{proof}
\noindent Theorem \ref{thm:reverse_engineering} demonstrates the optimality and convergence of the proposed algorithm, and provides a way in how to jointly engineer the optimal dispatch and estimation layers as a single optimization problem, which were conventionally separate.

\section{Stochastic Optimization-Estimation and Its Online Implementation}\label{sec:stochastic_online}
In this section, we modify the previous OPF-SE synthesis $(\bm{\mathcal{P}^3_t})$ to enhance it to a stochastic problem by incorporating the noise from the voltage estimation and the linearization errors of the AC power flow. We then develop a computationally efficient online solution to the stochastic problem and analyze its convergence and optimality.

\subsection{Stochastic Optimization-Estimation}
\label{sec:fwd}
The modelling and estimation errors can degrade the performance of feedback-based OPF solvers, sometimes even produce infeasible solutions \cite{guo2020solving,guo2020optimal}. 
To address this issue, we extend $(\bm{\mathcal{P}^3_t})$ to a stochastic problem to enable robust decision making.
In the stochastic formulation, we express the voltage profile as a combination of the linearized voltage model and a random vector $\bm{\xi}_t \in\mathbb{R}^N$ to model the inherent errors of the state estimation and the power flow linearization:
\begin{equation}\nonumber
    \mathbf{\tilde{v}}_t^{\textrm{real}}  = \mathbf{v}_t(\mathbf{u}_t) + \bm{\xi}_t.
\end{equation}
where the probability distribution of $\bm{\xi}_t$ is unknown but it has finite mean and covariance values for $t>0$. Accordingly, we change the voltage constraint \eqref{eq:p3_voltage_constraint_upper} to a stochastic version by replacing $\mathbf{v}_t(\mathbf{u}_t)$ with $\mathbf{v}_t(\mathbf{u}_t) + \bm{\xi}_t$:
\begin{equation}\label{eq:stochastic_voltage_constraint}\nonumber
\mathbf{r}(\mathbf{v}_t(\mathbf{u}_t) + \bm{\xi}_t) \leq  0.
\end{equation}
We can now formulate a stochastic OPF-SE problem that restricts the risk of voltage violation:
\begin{subequations}\label{eq:opf_se_prob}
\begin{align}
(\bm{\mathcal{P}_t^4}) & \underset{\mathbf{u}_t,\mathbf{z}_t,\mathbf{v}_t}{\min} && \mathbf{E}^{\mathbf{P}}_{\bm{\xi}} \quad \bigg[ C^{\textrm{OPF}}_t(\mathbf{u}_t) + C^{\textrm{SE}}_t(\mathbf{z}_t,\mathbf{v}_t(\mathbf{z}_t))\bigg],\\
& \textrm{subject to:} &&\textrm{Pr}\Big\{\mathbf{r}(\mathbf{v}_t(\mathbf{u}_t) + \bm{\xi}_t)\leq   0 \Big\} \geq 1-\beta, \label{eq:voltreg_random_upper}\\
& && \mathbf{v}_t(\mathbf{u}_t) = \mathbf{v}_t(\mathbf{z}_t),\\
& && \mathbf{u}_t\in\mathcal{X}_t,
\end{align}
\end{subequations}
where operator $\textrm{Pr}\{\cdot\}$ indicates a transformation of the inequality constraint into a chance constraint. The set-points $\mathbf{u}_t$ can be scheduled in a way that voltage limits are satisfied with the prescribed probability $1-\beta$. 
\begin{remark}[Uncertainties Realization]
There are a variety of classic ways
to reformulate the stochastic OPF-SE problem $(\bm{\mathcal{P}_t^4})$ to obtain tractable subproblems that can be solved by standard convex optimization solvers. These include assuming a specific functional form for the distribution (e.g., Gaussian) of $\bm{\xi}_t$ based on the statistical information of the historical data and using constraint violation risk metrics, such as those encoding value at risk (i.e., chance constraints), conditional value at risk (CVaR), distributional robustness, and support robustness. In addition, the linearization and estimation errors can also be sampled by evaluating the residual between the measurement and estimation results of the quantity of interests (e.g., voltage magnitude). This provides an empirical distribution of $\bm{\xi}_t$ without a prescribed distribution assumption. Hence, the result is a data-based reformulation of the stochastic OPF-SE problem. Since we assume the time-varying stochastic OPF and SE problems are well-posed due to the Slater's condition and full observability, it is obvious that $\bm{\xi}$ is bounded for any $t>0$. 
\end{remark}
The chance constraints for voltage regulation are approximated by leveraging the sample average of CVaR values \cite{rockafellar2000optimization,nemirovski2007convex}. Consider a random variable $z\in\mathbb{R}$ and a scale factor $\tau \in \mathbb{R}_{++}$, we have $\textrm{Pr}(\tau^{-1} z \geq 0) = \textrm{Pr}(z \geq 0) = \mathbb{E}\left[\mathbf{1}_{[0,\infty)}(\tau^{-1} z)\right]$ where $\mathbf{1}_{[0,\infty)}$ is the indication function over the set $[0,\infty)$. The key step to develop a convex approximation for  $\textrm{Pr}(\tau^{-1} z \geq 0) = \textrm{Pr}(z \geq 0)$ is to find a non-negative non-decreasing convex generating function $\psi(\cdot):\mathbb{R} \to \mathbb{R}$ with $\psi(z) > \psi(0) = 1$ for all $z>0$, which is called generating function that will generate a family of convex approximation for the chance constraints due to $\psi(\tau^{-1} z) \geq \mathbf{1}(\tau^{-1} z)$. Having the generating function leads to $\mathbf{E}\psi(\tau^{-1} z) \geq \mathbf{E}[\mathbf{1}_{[0,\infty)}(\tau^{-1} z)] = \textrm{Pr}(z\geq0)$, which is an upper bound on the chance constraint. Now replacing $z$ with $\mathbf{r}(\mathbf{v}(\mathbf{u}) + \bm{\xi})$ without the time index yields $\mathbf{E}[\psi(\tau^{-1}\mathbf{r}(\mathbf{v}(\mathbf{u}) + \bm{\xi}))] \geq \textrm{Pr}(\mathbf{r}(\mathbf{v}(\mathbf{u}) + \bm{\xi}) > 0)$ for all possible $\mathbf{u}$ and $\tau > 0$. We observe that if there exists $\tau > 0$ such that $\tau\mathbf{E}[\psi(\tau^{-1}\mathbf{r}(\mathbf{v}(\mathbf{u}) + \bm{\xi}))]\leq \tau \beta$, then $\textrm{Pr}(\mathbf{r}(\mathbf{v}(\mathbf{u}) + \bm{\xi}) > 0) \leq \beta $. This leads to a sufficient condition $\inf_{\tau>0} \left[\tau\mathbf{E}[\psi(\tau^{-1}\mathbf{r}(\mathbf{v}(\mathbf{u}) + \bm{\xi}))] - \tau\beta\right] \leq 0$ for $\textrm{Pr}(\mathbf{r}(\mathbf{v}(\mathbf{u}) + \bm{\xi}) > 0) \leq \beta $, which is a conservative approximation of the chance constraint \eqref{eq:voltreg_random_upper}. 

Clearly, the above approximation depends on the choice of the generating function $\psi(\cdot)$, such as Markov: $\psi(z) := [1+z]_+$, Chebyshev: $\psi(z) := ([1+z]_+)^2$, Traditional Chebyshev: $\psi(z) := (1+z)^2$ and Chernoff/Berstein: $\psi(z) := e^z$ \cite{nemirovski2007convex}. From the point of view of a tighter approximation with relative smaller value $\psi(z)$ than others, the Markov generating function approximates the chance constraint in the form of 
\begin{equation}\nonumber
\inf_{\tau>0} \Big[\mathbf{E}[\mathbf{r}(\mathbf{v}(\mathbf{u}) + \bm{\xi}) + \tau]_+ - \tau\beta\Big] \leq 0,
\end{equation}
which is related to the concept of Conditional Value at Risk. Quantifying both the frequency and the expected severity of constraint violations, the CVaR serves a widely adopted risk measure for optimization under uncertainties. Note that the Markov generating function is non-decreasing and $\mathbf{r}(\mathbf{u},\bm{\xi})$ is convex in $\mathbf{u}$ given any $\bm{\xi}$, which implies that the above CVaR-based chance constraint is convex in optimization variables. Employing the linear model \eqref{eq:linear_powerflow} and the voltage constraint $\mathbf{r}(\mathbf{v}_t(\mathbf{u}_t))= [(\mathbf{v}_t(\mathbf{u}_t) - \mathbf{v}^{\textrm{max}})^\top, (\mathbf{v}^{\textrm{min}} - \mathbf{v}_t(\mathbf{u}_t))^\top]^\top \leq 0$, a sample averaging CVaR-based convex approximation of the voltage constraint can be formulated as: 
\begin{equation} \nonumber
\begin{aligned}
    &\frac{1}{N_s}\sum_{s=1}^{N_s}\Big[\mathbf{v}_t(\mathbf{u}_t) - \mathbf{v}^{\textrm{max}} + \bm{\xi}_t^s + \overline{\bm{\tau}}_t \Big]_+ - \overline{\bm{\tau}}_t\beta \leq 0, \quad\forall s\in\Xi_t, \\
    &\frac{1}{N_s}\sum_{s=1}^{N_s}\Big[\mathbf{v}^{\textrm{min}} - \mathbf{v}_t(\mathbf{u}_t) - \bm{\xi}_t^s + \underline{\bm{\tau}_t} \Big]_+ - \underline{\bm{\tau}}_t\beta \leq 0, \quad\forall s\in\Xi_t,
\end{aligned}
\end{equation}
where vectors $\overline{\bm{\tau}}_t\in\mathbb{R}^N,\underline{\bm{\tau}}_t\in\mathbb{R}^N$ are the CVaR auxiliary variables \cite{rockafellar2000optimization}. Given $N_s$ samples $\Xi_t:=\{\bm{\xi}_t^s\}_{s=1}^{N_s}$ of random variable $\bm{\xi}_t$ at time $t$, the expectation of the left-hand side of \eqref{eq:voltreg_random_upper} is obtained via sample averaging. The sample average approximation (SAA) methods with a modest number of samples has been shown to be effective in many applications \cite{bertsimas2018robust,dall2017chance}. 
An approximation of \eqref{eq:voltreg_random_upper} for an arbitrary distribution can be
accommodated in the time-varying optimization-estimation synthesis problem as follows:
\begin{subequations}\label{eq:opf_se_cvar}
\begin{align}
(\bm{\mathcal{P}^5_t}) & \underset{\mathbf{u}_t,\mathbf{z}_t,\mathbf{v}_t,\bm{\tau}_t}{\min} &&  C_t^{\textrm{OPF}}(\mathbf{u}_t) + C_t^{\textrm{SE}}(\mathbf{z}_t,\mathbf{v}_t(\mathbf{z}_t)), \label{eq:opf_se_cvar_objective}\\
& \textrm{subject to:} && \mathbf{g}_{t}(\mathbf{v}_t(\mathbf{u}_t),\bm{\tau}_t) \leq 0, \label{eq:stochastic_opf_se_voltage_equality}\\  
&  && \mathbf{v}_t(\mathbf{u}_t) = \mathbf{v}_t(\mathbf{z}_t), \label{eq:stochastic_opf_se_voltage_inequality2}\\
& && \mathbf{u}_t\in\mathcal{X}_t,
\end{align}
\end{subequations}
where $\mathbf{g}_{t}(\mathbf{v}_t(\mathbf{u}_t),\bm{\tau}_t):=[\overline{\mathbf{g}}_t(\mathbf{v}_t(\mathbf{u}_t),\overline{\bm{\tau}}_t)^\top,\underline{\mathbf{g}}_t(\mathbf{v}_t(\mathbf{u}_t),\underline{\bm{\tau}}_t)^\top]^\top \in \mathbb{R}^{2N}$, $\bm{\tau}_t:=[\underline{\bm{\tau}}_t^\top,\overline{\bm{\tau}}_t^\top]^\top \in \mathbb{R}_+^{2N}$ and
\begin{subequations} \label{eq:CVaR_v}\nonumber
    \begin{align}
        & \overline{\mathbf{g}}_t(\mathbf{v}_t(\mathbf{u}_t),\overline{\bm{\tau}}_t)  = \\
        & ~~~~~~~~~~~~ \frac{1}{N_s}\sum_{s=1}^{N_s}\Big[\mathbf{v}_t(\mathbf{u}_t) - \mathbf{v}^{\textrm{max}} + \bm{\xi}_t^s + \overline{\bm{\tau}}_t \Big]_+ - \overline{\bm{\tau}}_t\beta,\\
        & \underline{\mathbf{g}}_t(\mathbf{v}_t(\mathbf{u}_t),\underline{\bm{\tau}}_t) = \\ 
        & ~~~~~~~~~~~~ \frac{1}{N_s}\sum_{s=1}^{N_s}\Big[\mathbf{v}^{\textrm{min}} - \mathbf{v}_t(\mathbf{u}_t) - \bm{\xi}_t^s + \underline{\bm{\tau}}_t \Big]_+ - \underline{\bm{\tau}}_t\beta.
    \end{align}
\end{subequations}
\begin{assumption}[Slater's Condition]\label{ass:slater_stochastic}
There exist a set of feasible inputs $\mathbf{u}_t \in\mathcal{X}_t$ and a set of auxiliary variables $\bm{\tau}_t \in\mathbb{R}_+$, such that $\mathbf{g}_t(\mathbf{u}_t,\bm{\tau}_t) \leq 0$ for all $t>0$. 
\end{assumption}
\noindent Similar to Assumption \ref{ass:slater_deterministic}, the Slater's condition above ensures strong duality and the feasibility of the well-posed stochastic joint OPF-SE problem \eqref{eq:opf_se_cvar} for a well-designed distribution network with high penetration of renewables.

\subsection{Regularization Errors Analysis}

Let $\mathcal{L}_t(\mathbf{u}_t,\mathbf{z}_t,\bm{\tau}_t,\bm{\lambda}_t)$ be the Lagrangian of \eqref{eq:opf_se_cvar}, where $\bm{\lambda}_t\in\mathbb{R}_+^{2N}$ denotes the Lagrange multiplier associated with voltage constraint \eqref{eq:stochastic_opf_se_voltage_inequality2}, i.e.,
\begin{equation}\label{eq:L_stochastic_opf_se_no-regularizer}
\begin{aligned}
    & \mathcal{L}_t(\mathbf{u}_t,\mathbf{z}_t,\bm{\tau}_t,\bm{\lambda}_t) = \\
    & ~~~~~~~~~~~~C^{\textrm{OPF}}_t(\mathbf{u}_t) + C^{\textrm{SE}}_t(\mathbf{z}_t,\mathbf{v}_t(\mathbf{z}_t)) + \bm{\lambda}_t^\top \mathbf{g}_t(\mathbf{u}_t,\bm{\tau}_t).
    \end{aligned}
\end{equation}
The optimal dual variables lie in a compact set due to the compact set $\mathcal{X}_t$ and the Slater's condition in Assumption \ref{ass:slater_stochastic} for all $t>0$.
The Lagrangian \eqref{eq:L_stochastic_opf_se_no-regularizer} is not strongly convex in the CVaR auxiliary variable $\bm{\tau}_t$ and not strongly concave in the dual variable $\bm{\lambda}_t$. To attain a better convergence performance of the gradient approach under time-varying settings, we consider the regularized Lagrangian:
\begin{equation}\label{eq:stochastic_L_opf_se_phi_v}
\begin{aligned}
    &\mathcal{L}_{\phi,v,t}(\mathbf{u}_t,\mathbf{z}_t,\bm{\tau}_t,\bm{\lambda}_t) = \\
    &~~~~~~~~~~~~~~~~~\mathcal{L}_{t}(\mathbf{u}_t,\mathbf{z}_t,\bm{\tau}_t,\bm{\lambda}_t) + \frac{v}{2}\|\bm{\tau}_t\|^2_2 - \frac{\phi}{2}\|\bm{\lambda}_t\|^2_2,
    \end{aligned}
 \end{equation}
with small constants $v>0$ and $\phi>0$ in the Tikhonov regularization terms. 
The regularized Lagrangian \eqref{eq:stochastic_L_opf_se_phi_v} is now strictly convex in all primal variables $(\mathbf{u}_t, \mathbf{z}_t, \bm{\tau}_t)$ and strictly concave in the dual variable $\bm{\lambda}_t$. 
The key advantage of the regularized Lagrangian is that it can leverage a gradient approach to attain an approximate solution of $(\bm{\mathcal{P}}_t^5)$ with improved convergence properties within an online implementation. To simplify notation, we define $\eta: = (\phi,v)$. 
We consider the following saddle-point problem:
\begin{equation}\label{eq:saddle_point_problem_regularized_phi_v}
    \max_{\bm{\lambda}_t\in\mathbb{R}^N_{+}}\min_{\begin{subarray}{c}\mathbf{u}_t\in\mathcal{X}_t,\\\mathbf{z}_t\in\mathcal{X}_t,\bm{\tau}_t\in\mathbb{R}^{2N}_+\end{subarray}} \mathcal{L}_{\eta,t} (\mathbf{u}_t,\mathbf{z}_t,\bm{\tau}_t,\bm{\lambda}_t),
\end{equation}
and let $(\mathbf{u}_{\eta,t}^*,\mathbf{z}_{\eta,t}^*,\bm{\tau}_{\eta,t}^*,\bm{\lambda}_{\eta,t}^*)$ denote the unique saddle-point of \eqref{eq:saddle_point_problem_regularized_phi_v} at time $t$. In general, the solutions to \eqref{eq:opf_se_cvar} and \eqref{eq:saddle_point_problem_regularized_phi_v} are different due to the regularization terms. This difference can be bounded as shown later. 
We first state the following assumption.
\begin{assumption}[Lipschitz Property] 
\label{ass:Lipschitz_Property_gradient_bound}
Define $\mathbf{x}_t:=[\mathbf{u}^\top_t,\mathbf{z}^\top_t,\bm{\tau}^\top_t]^\top$ 
and $\tilde{f}_t(\mathbf{x}_t):=C_t^{\textrm{OPF}}(\mathbf{x}_t) + C_t^{\textrm{SE}}(\mathbf{x}_t) + \frac{v}{2}\|\bm{\tau}_t\|^2_2$. There exist positive constants $G_f$ and $G_g$, such that the gradient of the convex and differentiable function $\nabla_{\mathbf{x}}\tilde{f}_t(\mathbf{x}_t)$ and any subgradient of the piece-wise linear constraints $\nabla_{\mathbf{x}}\mathbf{g}_t(\mathbf{x}_t)\in\partial_{\mathbf{x}} \mathbf{g}_t(\mathbf{x}_t)$ are bounded by
\begin{equation}\nonumber
    \|\nabla_{\mathbf{x}} \tilde{f}_t(\mathbf{x}_t)\|_2 \leq G_f, \quad \|\nabla_{\mathbf{x}} \mathbf{g}_t(\mathbf{x}_t)\|_2 \leq G_g,
\end{equation}
for all $\mathbf{x}_t\in\mathcal{X}_t \times \mathbb{R}_+^{2N}$ and $t>0$.
\end{assumption}
The constant $G_f$ exists if Assumption \ref{ass:C_OPF} holds and the piece-wise function $\mathbf{g}_t(\cdot)$ in $\mathbf{x}_t$ implies the existence of the constant $G_g$.
Let $\mathbf{x}^*_t:= [(\mathbf{u}^*_t)^\top,(\mathbf{z}^*_t)^\top,(\bm{\tau}^*_t)^\top]^\top$ be an optimal solution to problem \eqref{eq:opf_se_cvar} and $\mathbf{x}_{\eta,t}^* := [(\mathbf{u}_{\eta,t}^*)^\top,(\mathbf{z}_{\eta,t}^*)^\top,(\bm{\tau}_{\eta,t}^*)^\top]^\top$ be the unique primal optimal of the regularized problem \eqref{eq:saddle_point_problem_regularized_phi_v}.

\begin{theorem}[Regularization Error]
\label{thm:performance_bound}
For all $t>0$, the difference between $\mathbf{x}_{t}^*$ and $\mathbf{x}_{\eta,t}^*$ is bounded by
\begin{equation}\label{eq:regularizer_bound}
\begin{aligned}
&\|\mathbf{x}^*_t - \mathbf{x}_{\eta,t}^*\|^2_2 \leq \\ 
&~~~~~\left(\frac{2\left(G_f + G_g \|\bm{\lambda}_{v,t}^*\|_1\right)}{c}\right)^2 + \frac{\phi}{2c}\left(\|\bm{\lambda}^*_{v,t}\|_2^2 - \|\bm{\lambda}_{\eta,t}^*\|^2_2\right).
\end{aligned}
\end{equation}
where $\bm{\lambda}_{v,t}^*$ and $\bm{\lambda}_{\eta,t}^*$ are dual optimal solutions of \eqref{eq:saddle_point_problem_regularized_phi_v} for $\phi = 0$ \& $v > 0$ and $\phi > 0$ \& $v > 0$, respectively.
\end{theorem}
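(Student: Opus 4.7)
The plan is to prove \eqref{eq:regularizer_bound} by inserting an intermediate saddle point corresponding to primal-only Tikhonov regularisation (i.e., $v>0$ and $\phi=0$), denoted $(\mathbf{x}_{v,t}^*,\bm{\lambda}_{v,t}^*)$, and splitting the total error along the chain $\mathbf{x}_t^* \to \mathbf{x}_{v,t}^* \to \mathbf{x}_{\eta,t}^*$. Each leg will be bounded separately using strong convexity of $\tilde f_t$ in $\mathbf{x}$ with some modulus $c$ (inherited from Assumption~\ref{ass:C_OPF} on $C_t^{\mathrm{OPF}}$ together with the added $\tfrac{v}{2}\|\bm{\tau}_t\|_2^2$ term) and the uniform gradient bounds from Assumption~\ref{ass:Lipschitz_Property_gradient_bound}.

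For the first leg, $\mathbf{x}_t^*$ is a minimiser of the unregularised Lagrangian (with multiplier $\bm{\lambda}_t^*$) while $\mathbf{x}_{v,t}^*$ is a minimiser of $\tilde f_t(\mathbf{x}) + \bm{\lambda}_{v,t}^{*\top}\mathbf{g}_t(\mathbf{x})$. Writing the two variational-inequality-form optimality conditions at each other's optima, summing them, and then using Cauchy–Schwarz with $\|\nabla_{\mathbf{x}}\tilde f_t\|_2\le G_f$ and $\|\nabla_{\mathbf{x}}\mathbf{g}_t\|_2\le G_g$, I expect to obtain
\[
c\,\|\mathbf{x}_t^* - \mathbf{x}_{v,t}^*\|_2 \;\leq\; 2\bigl(G_f + G_g\,\|\bm{\lambda}_{v,t}^*\|_1\bigr),
\]
the square of which is precisely the first summand of \eqref{eq:regularizer_bound}. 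For the second leg, I would apply the dual-Tikhonov argument of \cite{koshal2011multiuser,koshal2009distributed}: evaluating the two saddle-point inequalities of $\mathcal{L}_{v,t}$ and $\mathcal{L}_{\eta,t}$ at each other's optima, cancelling the shared $\tilde f_t$ and $\mathbf{g}_t$ contributions, and invoking strong convexity of $\mathcal{L}_{\eta,t}(\cdot,\bm{\lambda}_{\eta,t}^*)$ in $\mathbf{x}$, the only surviving $\phi$-dependent residual collapses into
\[
\|\mathbf{x}_{v,t}^* - \mathbf{x}_{\eta,t}^*\|_2^2 \;\leq\; \frac{\phi}{2c}\bigl(\|\bm{\lambda}_{v,t}^*\|_2^2 - \|\bm{\lambda}_{\eta,t}^*\|_2^2\bigr),
\]
which is the second summand. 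Combining the two legs (absorbing the $\|a+b\|_2^2 \leq \ldots$ arithmetic into the constants already present) yields \eqref{eq:regularizer_bound}.

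The main obstacle will be the first leg, because the original Lagrangian $\mathcal{L}_t$ is \emph{not} strongly convex in $\bm{\tau}_t$, so the textbook trick of subtracting the gradients at $\mathbf{x}_t^*$ and $\mathbf{x}_{v,t}^*$ and invoking monotonicity of $\nabla_{\mathbf{x}}\mathcal{L}_t$ directly is unavailable. My workaround is to carry out the strong-convexity step only on the $v$-regularised side, where $\tilde f_t$ is strongly convex, and push every non-strongly-monotone ingredient onto the right-hand side, where Assumption~\ref{ass:Lipschitz_Property_gradient_bound} converts it into the explicit $G_f + G_g\|\bm{\lambda}_{v,t}^*\|_1$ term. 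A secondary but necessary subtlety is orienting the saddle-point comparison in the second leg so that the $\phi$-dependent residual emerges as the \emph{difference} $\|\bm{\lambda}_{v,t}^*\|_2^2 - \|\bm{\lambda}_{\eta,t}^*\|_2^2$ (rather than a sum or absolute value), which I would secure by pattern-matching the standard ordering from \cite{koshal2009distributed}.
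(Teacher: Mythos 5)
Your proposal follows essentially the same route as the paper's proof: the same decomposition of $\|\mathbf{x}_t^*-\mathbf{x}_{\eta,t}^*\|_2^2$ through the intermediate primal-only-regularized saddle point $\mathbf{x}_{v,t}^*$, the same strong-convexity-of-$\tilde f_t$ plus Cauchy--Schwarz argument with the bounds $G_f$, $G_g$ for the first leg, and the same Koshal-style dual-regularization comparison (saddle-point inequalities, complementary slackness, first-order optimality) producing the $\tfrac{\phi}{2c}(\|\bm{\lambda}_{v,t}^*\|_2^2-\|\bm{\lambda}_{\eta,t}^*\|_2^2)$ term for the second leg. The only point of note is that both you and the paper must combine the two legs of squared norms, and your acknowledgment that this step requires some arithmetic care is, if anything, more honest than the paper's bare invocation of the ``triangle inequality'' for squared distances.
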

\begin{proof}
See Appendix \ref{appendix_performance_bound}.
\end{proof}
\noindent By Slater's condition, the dual optimal sets $\Lambda_{v,t}$ and $\Lambda_{\eta,t}$ of \eqref{eq:saddle_point_problem_regularized_phi_v} are nonempty for $\phi = 0$ \& $v > 0$ and $\phi > 0$ \& $v > 0$, respectively. In what follows, let $\bm{\lambda}_{v,t}^* \in \Lambda_{v,t}$ and $\bm{\lambda}_{\eta,t}^* \in \Lambda_{\eta,t}$ be arbitrary but fixed dual optimal solutions for \eqref{eq:saddle_point_problem_regularized_phi_v}. The bound in \eqref{eq:regularizer_bound} is jointly dependent on the Lipschitz properties $(G_f,G_g)$ of the objective $\tilde{f}(\mathbf{x}):=f(\tilde{\mathbf{x}}) + \frac{v}{2}\|\bm{\tau}\|^2_2$ and the constraint $\mathbf{g}_t(\mathbf{x}_t)$, as well as the growth property of the objective function $\tilde{f}(\mathbf{x})$ defined by the constant $c>0$.

\subsection{Online Joint OPF-SE Algorithm}
In this subsection, we present an online solution to the time-varying stochastic OPF-SE synthesis problem \eqref{eq:opf_se_cvar}, which enables the DERs to respond timely to variations in system states. In particular, we propose an online primal-dual gradient-based algorithm to solve the regularized stochastic OPF-SE problem \eqref{eq:saddle_point_problem_regularized_phi_v}. The DER dispatch decisions and voltage estimators are iteratively updated in real time towards an approximate solution to the time-varying problem \eqref{eq:opf_se_cvar}. The gradient updates are presented in detail as follows:
\begin{subequations}\label{eq:stochastic_opf_se_controller}
\begin{align}
\mathbf{u}_{\eta,t+1} & =\left[\mathbf{u}_{\eta,t} - \epsilon \nabla_{\mathbf{u}}\mathcal{L}_{\eta,t}(\mathbf{u},\mathbf{z},\bm{\tau},\bm{\lambda})\Big|_{\begin{subarray}\mathbf{u}_{\eta,t},\mathbf{z}_{\eta, t},\\\bm{\tau}_{\eta, t},\bm{\lambda}_{\eta,t}\end{subarray}} \right]_{\mathcal{X}_t},\label{eq:stochastic_opf_se_controller_opf}\\  
\bm{\tau}_{\eta, t+1} & = \left[\bm{\tau}_{\eta,t} - \epsilon \nabla_{\bm{\tau}}\mathcal{L}_{\eta,t}(\mathbf{u},\mathbf{z},\bm{\tau},\bm{\lambda})\Big|_{\begin{subarray}\mathbf{u}_{\eta,t},\mathbf{z}_{\eta,t},\\\bm{\tau}_{\eta,t},\bm{\lambda}_{\eta,t}\end{subarray}}\right]_{\mathbb{R}_+}, \label{eq:stochastic_opf_se_controller_cvar}\\
\bm{\lambda}_{\eta,t+1} & = \bigg[\bm{\lambda}_{\eta,t} + \epsilon \left(\mathbf{g}_t(\mathbf{v}_t(\mathbf{z}_{\eta,t}), \bm{\tau}_{\eta,t}) - \phi\bm{\lambda}_{\eta,t}\right) \bigg]_{\mathbb{R}_+},\label{eq:stochastic_opf_se_controller_lambda}\\
\mathbf{z}_{\eta,t+1} & = \mathbf{z}_{\eta,t} - \epsilon \nabla_{\mathbf{z}}\mathcal{L}_{\eta,t}(\mathbf{u},\mathbf{z},\bm{\tau},\bm{\lambda})\Big|_{\begin{subarray}\mathbf{u}_{\eta,t},\mathbf{z}_{\eta,t},\\\bm{\tau}_{\eta,t},\bm{\lambda}_{\eta,t}\end{subarray}}, \label{eq:stochastic_opf_se_controller_se}
\end{align}
\end{subequations}
where $\epsilon >0$ is a constant step size. 
\begin{algorithm}
    \caption{(Online Joint OPF-SE Algorithm)}\label{algorithm:online_OPF-SE algorithm}
    \begin{algorithmic}[1]
        \Require[S0] Initialize the DER dispatch set-points and estimation variables $\{\mathbf{u}_{\eta,0},\mathbf{z}_{\eta,0}\}$, CVaR auxiliary variables $\bm{\tau}_{\eta,0}$ and dual variables $\bm{\lambda}_{\eta,0}$.
        \While{$t = 0:T$}
            \State[S1] Network operator collects the voltage magnitude measurements $\hat{v}_{i,t}, \forall i\in\mathcal{M}_v$, the load pseudo-measurements $\hat{p}_{i,t},\forall i\in\mathcal{M}_p$ and $\hat{q}_{i,t},\forall i\in\mathcal{M}_q$.
            \State[S2] Network operator updates the estimation variables \eqref{eq:stochastic_opf_se_controller_se}.
            \State[S3] Network operator utilizes the estimation feedback $\mathbf{v}_t =  \mathbf{v}_t(\mathbf{z}_{\eta,t})$ to update the dual variables \eqref{eq:stochastic_opf_se_controller_lambda}.
            \State[S4] Network operator transmits the dual variables $\bm{\lambda}_t$ to end-users.
            \State[S5] End-users update the dispatch decisions \eqref{eq:stochastic_opf_se_controller_opf} and the CVaR auxiliary variables \eqref{eq:stochastic_opf_se_controller_cvar}.
        \EndWhile
\end{algorithmic}
\end{algorithm}
\noindent For the time-varying case at hand, the online optimization-estimation Algorithm \ref{algorithm:online_OPF-SE algorithm} is able to capture the voltage trajectories due to the
variability of underlying network states (i.e., renewables generation and loads). Before analyzing the convergence of Algorithm \ref{algorithm:online_OPF-SE algorithm}, we introduce additional assumptions to limit the difference between the optimization solutions of consecutive time instants.
\begin{assumption}\label{ass:bound_primal}
For all $t>0$, there exist constants $\sigma_{\mathbf{u}} \geq 0$, $\sigma_{\mathbf{z}} \geq 0$ and $\sigma_{\bm{\tau}} \geq 0$ to bound the primal optimizer of \eqref{eq:saddle_point_problem_regularized_phi_v} at two consecutive time steps, i.e., $\|\mathbf{u}^*_{\eta,t+1} - \mathbf{u}^*_{\eta,t}\|_2 \leq \sigma_{\mathbf{u}}$, $\|\mathbf{z}^*_{\eta,t+1} - \mathbf{z}^*_{\eta,t}\|_2 \leq \sigma_{\mathbf{z}}$ and $\|\bm{\tau}^*_{\eta,t+1} - \bm{\tau}^*_{\eta,t}\|_2 \leq \sigma_{\bm{\tau}}$.
\end{assumption}
\begin{assumption}\label{ass:bound_constraints}
The difference between constraint \eqref{eq:stochastic_opf_se_voltage_equality} at the optimal solutions of two consecutive time steps is bounded by constant $\sigma_{\mathbf{g}}\geq 0$ for all $t>0$, i.e.,
\begin{equation*}
\Big\|\mathbf{g}_{t+1}(\mathbf{v}_{t+1}(\mathbf{u}_{\eta,t+1}^*), \bm{\tau}_{\eta,t+1}^*) -\mathbf{g}_{t}(\mathbf{v}_t(\mathbf{u}_{\eta,t}^*), \bm{\tau}_{\eta,t}^*) \Big\|_1 \leq \sigma_{\mathbf{g}},\\
\end{equation*}
\begin{equation*}
\Big\|\mathbf{g}_{t+1}(\mathbf{v}_{t+1}(\mathbf{z}_{\eta,t+1}^*), \bm{\tau}_{\eta,t+1}^*) -\mathbf{g}_{t}(\mathbf{v}_t(\mathbf{z}_{\eta,t}^*), \bm{\tau}_{\eta,t}^*) \Big\|_1 \leq \sigma_{\mathbf{g}}.\\
\end{equation*}
\end{assumption}
Assumptions \ref{ass:bound_primal} and \ref{ass:bound_constraints} define the variability of the primal variables and constraint space of the time-varying saddle-point problem \eqref{eq:saddle_point_problem_regularized_phi_v}. Such variability can also be defined with respect to dual variables, i.e.,  $\|\bm{\lambda}_{\eta,t+1}^* - \bm{\lambda}_{\eta,t}^*\| \leq \sigma_{\bm{\lambda}}$
\cite{simonetto2014double}.
Define $\mathbf{e}_{\eta,t}^*:=[(\mathbf{u}_{\eta,t}^*)^\top, (\mathbf{z}_{\eta,t}^*)^\top, (\bm{\tau}_{\eta,t}^*)^\top, (\bm{\lambda}_{\eta,t}^*)^\top]^\top$, and $\|\mathbf{e}_{\eta,t}^* - \mathbf{e}_{\eta,t-1}^*\| \leq \sigma_\mathbf{e}$, for some $\sigma_{\mathbf{e}} \geq 0$. The variability bounds in Assumptions \ref{ass:bound_primal} and \ref{ass:bound_constraints} are closely related to the time-varying PV and load inputs, which always exist due to the DERs and loads physical capacity. For an online implementation with a relatively small time interval (in seconds) between two consecutive control commands, such variability is small due to gradually ramping PV and load inputs. For convenience in further analysis, we define a time-varying operator: 
\begin{equation}\label{eq:stochastic_online_gradient_operator}
\begin{aligned}
    &\bm{\Pi}_{\eta,t}: \{\mathbf{u}_\eta,\bm{\tau}_\eta,\bm{\lambda}_\eta\,\mathbf{z}_\eta\}:\longmapsto \\
    &~~~~~~~~~~~~~~~~~\begin{bmatrix}
    \nabla_{\mathbf{u}}\mathcal{L}_{\eta,t}(\mathbf{u},\mathbf{z},\bm{\tau},\bm{\lambda})|_{\mathbf{u}_{\eta,t},\mathbf{z}_{\eta,t},\bm{\tau}_{\eta,t},\bm{\lambda}_{\eta,t}}\\
    \nabla_{\bm{\tau}}\mathcal{L}_{\eta,t}(\mathbf{u},\mathbf{z},\bm{\tau},\bm{\lambda})|_{\mathbf{u}_{\eta,t},\mathbf{z}_{\eta,t},\bm{\tau}_{\eta,t},\bm{\lambda}_{\eta,t}}\\
    -\left(\mathbf{g}_t(\mathbf{v}_t(\mathbf{z}_{\eta,t}), \bm{\tau}_{\eta,t})  - \phi\bm{\lambda}_{\eta,t}\right)\\
    \nabla_{\mathbf{z}}\mathcal{L}_{\eta,t}(\mathbf{u},\mathbf{z},\bm{\tau},\bm{\lambda})|_{\mathbf{u}_{\eta,t},\mathbf{z}_{\eta,t},\bm{\tau}_{\eta,t},\bm{\lambda}_{\eta,t}}
    \end{bmatrix},
\end{aligned}
\end{equation}
which is utilized to compute the gradients in the time-varying updates \eqref{eq:stochastic_opf_se_controller} as
\begin{equation}\nonumber
    \mathbf{e}_{\eta,t+1} = \bigg[\mathbf{e}_{\eta, t} - \epsilon\bm{\Pi}_{\eta,t}\left(\mathbf{e}_{\eta,t}\right) \bigg]_{\mathcal{X}_t\times\mathbb{R}_+\times\mathbb{R}}.
\end{equation}
\begin{lemma}\label{lma:LM_stochastic}
Given Assumptions \ref{ass:C_OPF} and \ref{ass:slater_stochastic}, the operator \eqref{eq:stochastic_online_gradient_operator} is strongly monotone and Lipschitz continuous, i.e., there are constants $\widetilde{M}>0$ and $\widetilde{L}>0$ such that:
\begin{equation}\label{eq:property_Monotone_stochastic}
\left(\bm{\Pi}_{\eta,t}(\mathbf{e}) - \bm{\Pi}_{\eta,t}(\mathbf{e}')\right)^\top \left(\mathbf{e} - \mathbf{e}'\right) \geq \widetilde{M}\|\mathbf{e} - \mathbf{e}'\|_2^2,
\end{equation}
\begin{equation}\label{eq:property_Lipschitz_stochastic}
\|\bm{\Pi}_{\eta,t}(\mathbf{e}) - \bm{\Pi}_{\eta,t}(\mathbf{e}') \|_2^2 \leq \widetilde{L}^2 \|\mathbf{e} - \mathbf{e}'\|_2^2.
\end{equation}
for any feasible $\mathbf{e}$ and $\mathbf{e}'$.
\end{lemma}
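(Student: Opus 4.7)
The plan is to mirror the proof of Lemma \ref{lemma:strongly_monotone} (for the deterministic case) while extending it to handle the additional CVaR auxiliary variable $\bm{\tau}_t$ and the piecewise-linear constraint $\mathbf{g}_t$ in place of the affine residual $\mathbf{r}(\mathbf{v}_t(\cdot))$. For \eqref{eq:property_Monotone_stochastic}, I would evaluate $(\bm{\Pi}_{\eta,t}(\mathbf{e}) - \bm{\Pi}_{\eta,t}(\mathbf{e}'))^\top(\mathbf{e}-\mathbf{e}')$ blockwise: (i) the $\mathbf{u}$-diagonal yields $\geq M_{\textrm{OPF}}\|\mathbf{u}-\mathbf{u}'\|_2^2$ by the strong convexity of $C_t^{\textrm{OPF}}$ in Assumption \ref{ass:C_OPF}; (ii) the $\mathbf{z}$-diagonal yields $\geq M_{\textrm{SE}}\|\mathbf{z}-\mathbf{z}'\|_2^2$, because Assumption \ref{ass:SE_observability} together with the affine map \eqref{eq:linear_powerflow} makes $C_t^{\textrm{SE}}(\mathbf{z},\mathbf{v}_t(\mathbf{z}))$ a strongly convex quadratic in $\mathbf{z}$; (iii) the $\bm{\tau}$-diagonal contributes at least $v\|\bm{\tau}-\bm{\tau}'\|_2^2$ from the Tikhonov term $\frac{v}{2}\|\bm{\tau}\|_2^2$, plus a non-negative convexity contribution from $\bm{\lambda}^\top\mathbf{g}_t(\mathbf{u},\bm{\tau})$ since $\bm{\lambda}\geq 0$; (iv) the $\bm{\lambda}$-diagonal contributes $\phi\|\bm{\lambda}-\bm{\lambda}'\|_2^2$ from the dual regularization $-\frac{\phi}{2}\|\bm{\lambda}\|_2^2$.

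The off-diagonal saddle-point terms pair the primal gradients of $\bm{\lambda}^\top\mathbf{g}_t(\mathbf{u},\bm{\tau})$ against the dual residual $-\mathbf{g}_t(\mathbf{v}_t(\mathbf{z}),\bm{\tau})$. The key technical hurdle is that the dual block is evaluated at the voltage \emph{estimate} $\mathbf{v}_t(\mathbf{z})$ rather than at $\mathbf{v}_t(\mathbf{u})$, which breaks the textbook cancellation of saddle-point cross terms. I would handle this by adding and subtracting $\mathbf{g}_t(\mathbf{v}_t(\mathbf{u}),\bm{\tau})$: the ``matched'' piece combines with the $\mathbf{u}$- and $\bm{\tau}$-gradient cross terms into a non-negative contribution through convexity of $\mathbf{g}_t$ in $(\mathbf{u},\bm{\tau})$ with $\bm{\lambda}\geq 0$, while the ``mismatch'' residual $\mathbf{g}_t(\mathbf{v}_t(\mathbf{u}),\bm{\tau})-\mathbf{g}_t(\mathbf{v}_t(\mathbf{z}),\bm{\tau})$ is bounded using the Lipschitz constant of the $[\cdot]_+$-sums defining $\mathbf{g}_t$ and the bounded slope of the affine $\mathbf{v}_t(\cdot)$. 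A Young-inequality split with a sufficiently small coefficient then absorbs this cross coupling into the four diagonal strong-convexity contributions, leaving a strictly positive net constant $\widetilde{M}$.

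For \eqref{eq:property_Lipschitz_stochastic}, I would bound each block of $\bm{\Pi}_{\eta,t}(\mathbf{e})-\bm{\Pi}_{\eta,t}(\mathbf{e}')$ separately: the smooth quadratic pieces $\nabla_{\mathbf{u}}C_t^{\textrm{OPF}}$ and $\nabla_{\mathbf{z}}C_t^{\textrm{SE}}$ have Lipschitz gradients by Assumption \ref{ass:C_OPF} and the finiteness of $\mathbf{W}_t$; the constraint-gradient contributions are controlled using the bounded subgradient property $\|\nabla_{\mathbf{x}}\mathbf{g}_t(\mathbf{x}_t)\|_2\leq G_g$ in Assumption \ref{ass:Lipschitz_Property_gradient_bound}, combined with boundedness of $\bm{\lambda}$ over the compact optimal dual set ensured by Assumption \ref{ass:slater_stochastic}; the regularization terms $v\bm{\tau}$ and $\phi\bm{\lambda}$ are trivially Lipschitz. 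Summing these blockwise bounds yields a single constant $\widetilde{L}$ that is uniform in $t$ because all structural parameters are $t$-independent. The main obstacle throughout is the non-smoothness of $\mathbf{g}_t$ from the $[\cdot]_+$ operator, which forces the analysis to be carried out with subgradients and requires a careful choice of the Young-inequality weight so that $\widetilde{M}>0$; as in the proof of Lemma \ref{lemma:strongly_monotone}, the parallel SE gradient $\nabla_{\mathbf{z}}C_t^{\textrm{SE}}$ and the extra $\bm{\tau}$-block do not destroy either property.
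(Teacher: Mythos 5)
The paper itself offers no standalone proof of this lemma: the justification is the one-line assertion inside the proof of Lemma \ref{lma:step_size_stochastic} that a convex-quadratic objective plus piecewise-linear constraints plus the two Tikhonov terms make the Lagrangian strongly convex--strongly concave, hence the saddle operator strongly monotone and Lipschitz. Your blockwise decomposition is the right way to actually substantiate that claim, and you correctly isolate the one genuinely nontrivial feature, namely that the dual block is evaluated at $\mathbf{v}_t(\mathbf{z})$ while the primal blocks see $\mathbf{v}_t(\mathbf{u})$. However, your resolution of that mismatch does not close. After the add-and-subtract, the residual cross term has the form $\bigl(\delta(\mathbf{e})-\delta(\mathbf{e}')\bigr)^\top(\bm{\lambda}-\bm{\lambda}')$ with $\|\delta(\mathbf{e})-\delta(\mathbf{e}')\|_2\leq L_\delta\,(\|\mathbf{u}-\mathbf{u}'\|_2+\|\mathbf{z}-\mathbf{z}'\|_2)$, where $L_\delta$ scales with $G_g$ and $\|[\mathbf{R}\ \mathbf{X}]\|_2$. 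A Young split $ab\leq \tfrac{a^2}{2\gamma}+\tfrac{\gamma b^2}{2}$ must charge $\tfrac{\gamma L_\delta}{2}\|\bm{\lambda}-\bm{\lambda}'\|_2^2$ against the dual margin $\phi$ and $\tfrac{L_\delta}{2\gamma}$ against the primal margins; both requirements are satisfiable for some $\gamma$ only if $L_\delta^2\lesssim \phi\cdot\min(m_{\mathbf{u}},m_{\mathbf{z}},v)$. Since $\phi$ and $v$ are explicitly ``small constants,'' this is a genuine parameter restriction, not a free consequence, so you cannot simply assert ``a strictly positive net constant $\widetilde{M}$'' without stating it. (The paper's own escape, visible in the proof of Theorem \ref{thm:reverse_engineering}, is to invoke the feedback identity $\mathbf{v}_t(\mathbf{u}^k)=\mathbf{v}_t(\mathbf{z}^k)$ along trajectories, which kills $\delta$ but is not a property of the operator at arbitrary feasible $\mathbf{e}$.)

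The second gap is in the Lipschitz half. Bounded subgradients ($\|\nabla_{\mathbf{x}}\mathbf{g}_t\|_2\leq G_g$) give you \emph{boundedness} of the blocks $\nabla_{\mathbf{u}}(\bm{\lambda}^\top\mathbf{g}_t)$ and $\nabla_{\bm{\tau}}(\bm{\lambda}^\top\mathbf{g}_t)$, not Lipschitz continuity: because $\mathbf{g}_t$ is piecewise linear, any subgradient selection is piecewise constant in $(\mathbf{u},\bm{\tau})$ and jumps by an amount of order $G_g$ whenever a sample term $[\,\cdot\,]_+$ crosses its kink. For $\bm{\lambda}\neq 0$ the operator is therefore discontinuous across those kinks, and no finite $\widetilde{L}$ satisfies \eqref{eq:property_Lipschitz_stochastic} there; working ``with subgradients'' does not repair this. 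You would need either a smoothed surrogate for $[\,\cdot\,]_+$ (so that $\mathbf{g}_t$ is $C^1$ with Lipschitz gradient) or to restrict the statement to a region with a fixed active pattern. Your remaining blocks are fine: strong convexity of $C_t^{\mathrm{OPF}}$ in $\mathbf{u}$, of $C_t^{\mathrm{SE}}$ in $\mathbf{z}$ (via Assumption \ref{ass:SE_observability}, which you should list explicitly since the lemma cites only Assumptions \ref{ass:C_OPF} and \ref{ass:slater_stochastic}), the $v$ and $\phi$ diagonal contributions, and the nonnegativity of the matched saddle cross terms are all correct.
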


\begin{lemma} \label{lma:step_size_stochastic}
If the step size $\epsilon$ satisfies
\begin{equation} 
0 < \epsilon < 2\widetilde{M}/\widetilde{L}^2,
\end{equation}
then Algorithm \ref{algorithm:online_OPF-SE algorithm} converges to the unique saddle point of \eqref{eq:saddle_point_problem_regularized_phi_v}.
\end{lemma}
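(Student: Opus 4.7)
The plan is to establish a contraction-type inequality for the distance between the iterates generated by Algorithm \ref{algorithm:online_OPF-SE algorithm} and the unique saddle point $\mathbf{e}_{\eta,t}^*$ of \eqref{eq:saddle_point_problem_regularized_phi_v}, using the two properties of $\bm{\Pi}_{\eta,t}$ furnished by Lemma \ref{lma:LM_stochastic}. The update rule can be written in the compact projected form $\mathbf{e}_{\eta,t+1} = [\mathbf{e}_{\eta,t} - \epsilon\,\bm{\Pi}_{\eta,t}(\mathbf{e}_{\eta,t})]_{\mathcal{X}_t\times\mathbb{R}_+\times\mathbb{R}}$, and the saddle point itself satisfies the fixed-point relation $\mathbf{e}_{\eta,t}^* = [\mathbf{e}_{\eta,t}^* - \epsilon\,\bm{\Pi}_{\eta,t}(\mathbf{e}_{\eta,t}^*)]_{\mathcal{X}_t\times\mathbb{R}_+\times\mathbb{R}}$ by the optimality/KKT conditions of \eqref{eq:saddle_point_problem_regularized_phi_v}. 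This will let me compare the two in a symmetric way.

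The key step is to expand
\begin{equation*}
\|\mathbf{e}_{\eta,t+1} - \mathbf{e}_{\eta,t}^*\|_2^2 \leq \|\mathbf{e}_{\eta,t} - \mathbf{e}_{\eta,t}^* - \epsilon\bigl(\bm{\Pi}_{\eta,t}(\mathbf{e}_{\eta,t}) - \bm{\Pi}_{\eta,t}(\mathbf{e}_{\eta,t}^*)\bigr)\|_2^2,
\end{equation*}
where the inequality uses the non-expansiveness of the projection onto the convex set $\mathcal{X}_t\times\mathbb{R}_+\times\mathbb{R}$. Expanding the square and applying \eqref{eq:property_Monotone_stochastic} to the cross term and \eqref{eq:property_Lipschitz_stochastic} to the quadratic term yields
\begin{equation*}
\|\mathbf{e}_{\eta,t+1} - \mathbf{e}_{\eta,t}^*\|_2^2 \leq \bigl(1 - 2\epsilon\widetilde{M} + \epsilon^2\widetilde{L}^2\bigr)\|\mathbf{e}_{\eta,t} - \mathbf{e}_{\eta,t}^*\|_2^2.
\end{equation*}
Setting $\rho(\epsilon):=\sqrt{1 - 2\epsilon\widetilde{M} + \epsilon^2\widetilde{L}^2}$ and solving $\rho(\epsilon)<1$ gives exactly the stated condition $0<\epsilon<2\widetilde{M}/\widetilde{L}^2$, so the mapping is a strict contraction with modulus $\rho(\epsilon)\in[0,1)$.

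From here the conclusion follows by iterating the contraction inequality: $\|\mathbf{e}_{\eta,t+k} - \mathbf{e}_{\eta,t}^*\|_2 \leq \rho(\epsilon)^k\|\mathbf{e}_{\eta,t} - \mathbf{e}_{\eta,t}^*\|_2 \to 0$ in the static (time-frozen) case that the lemma addresses, which also guarantees uniqueness of the fixed point via Banach's theorem. I would close by remarking that in the genuinely time-varying online setting, combining this contraction with Assumptions \ref{ass:bound_primal}–\ref{ass:bound_constraints} and the triangle inequality $\|\mathbf{e}_{\eta,t+1}-\mathbf{e}_{\eta,t+1}^*\|\leq \|\mathbf{e}_{\eta,t+1}-\mathbf{e}_{\eta,t}^*\|+\sigma_{\mathbf{e}}$ produces an ultimately bounded tracking error.

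The main obstacle I anticipate is being careful with the projection: the feedback quantity $\mathbf{g}_t(\mathbf{v}_t(\mathbf{z}_{\eta,t}),\bm{\tau}_{\eta,t})$ that appears in \eqref{eq:stochastic_opf_se_controller_lambda} is evaluated at $\mathbf{z}_{\eta,t}$ rather than at $\mathbf{u}_{\eta,t}$, so the operator $\bm{\Pi}_{\eta,t}$ must be assembled precisely as in \eqref{eq:stochastic_online_gradient_operator} for the symmetric cross-term cancellation to go through; this asymmetric structure is exactly what Lemma \ref{lma:LM_stochastic} has already absorbed into the constants $\widetilde{M},\widetilde{L}$, so invoking it as a black box is crucial. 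The remaining steps are then essentially the standard projected-gradient contraction argument used in the proof of Lemma \ref{lem:stepsize}.
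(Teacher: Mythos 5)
Your argument is correct and follows essentially the same route as the paper: the paper's proof simply invokes the strong monotonicity \eqref{eq:property_Monotone_stochastic} and Lipschitz continuity \eqref{eq:property_Lipschitz_stochastic} of $\bm{\Pi}_{\eta,t}$ and defers the standard projected primal--dual contraction argument to the cited prior works, whereas you write that argument out explicitly (non-expansiveness of the projection, expansion of the square, contraction modulus $\sqrt{1-2\epsilon\widetilde{M}+\epsilon^2\widetilde{L}^2}<1$), which is exactly the computation the paper itself performs later in the proof of Theorem \ref{thm:online_convergence}. Your observation that the asymmetry from evaluating $\mathbf{g}_t$ at $\mathbf{z}_{\eta,t}$ in the dual update is absorbed into Lemma \ref{lma:LM_stochastic} is also consistent with how the paper treats it.
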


\begin{proof}
Note that the objective function in   \eqref{eq:opf_se_cvar} $C_t^{\textrm{OPF}}(\mathbf{u}_t) + C_t^{\textrm{SE}}(\mathbf{z}_t,\mathbf{v}_t(\mathbf{z}_t))$ is convex quadratic and the associated constraint function $\mathbf{g}_{t}(\mathbf{v}_t(\mathbf{u}_t),\bm{\tau}_t)$ is piece-wise linear, which implies that the saddle-point problem \eqref{eq:saddle_point_problem_regularized_phi_v} with regularizers is strongly convex on all primal variables and strongly concave on all dual variables. This leads to the strongly monotone and Lipschitz continuous properties of the gradient operator \eqref{eq:stochastic_online_gradient_operator}. Similar to Lemma \ref{lem:stepsize}, the step size criteria for the convergence of Algorithm can be easily established by following the analysis procedure in our existing works \cite{zhou2019accelerated,guo2020solving}.
\end{proof}

To conclude, the convergence and online tracking performance of Algorithm \ref{algorithm:online_OPF-SE algorithm} are provided next.

\begin{theorem}[Convergence Analysis]
Given any step size $0 < \epsilon < 2\widetilde{M}/\widetilde{L}^2$, the distance between the sequence $\{\mathbf{e}_{\eta,t}\}$ generated by Algorithm \ref{algorithm:online_OPF-SE algorithm} and the unique saddle-point $\mathbf{e}_{\eta,t}^*$ of the time-varying problem \eqref{eq:saddle_point_problem_regularized_phi_v} is bounded as
\begin{equation}\label{eq:online_bound_thm}
    \lim_{t\to \infty}\sup \|\mathbf{e}_{\eta,t} - \mathbf{e}_{\eta,t}^*\|_2 = \frac{\sigma_\mathbf{e}}{\sqrt{1 - 2\epsilon\widetilde{M} + \epsilon^2\widetilde{L}^2}}.
\end{equation}
\label{thm:online_convergence}
\end{theorem}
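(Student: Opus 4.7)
The plan is to apply a standard contraction-plus-drift argument familiar from time-varying primal-dual analyses, built around the strong monotonicity and Lipschitz bounds from Lemma \ref{lma:LM_stochastic} together with the variability bound $\sigma_{\mathbf{e}}$ derived from Assumptions \ref{ass:bound_primal} and \ref{ass:bound_constraints}. First, I would split the tracking error using the triangle inequality,
\begin{equation*}
\|\mathbf{e}_{\eta,t+1} - \mathbf{e}_{\eta,t+1}^*\|_2 \leq \|\mathbf{e}_{\eta,t+1} - \mathbf{e}_{\eta,t}^*\|_2 + \|\mathbf{e}_{\eta,t}^* - \mathbf{e}_{\eta,t+1}^*\|_2,
\end{equation*}
so that the first term quantifies how much the gradient update contracts toward the current (instantaneous) saddle point, and the second term, bounded by $\sigma_{\mathbf{e}}$ through Assumption \ref{ass:bound_primal}--\ref{ass:bound_constraints}, captures how fast the saddle point drifts.

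For the contraction term I would use that $\mathbf{e}_{\eta,t}^*$ is a fixed point of the projected map $\mathbf{e}\mapsto [\mathbf{e} - \epsilon\bm{\Pi}_{\eta,t}(\mathbf{e})]_{\mathcal{X}_t\times\mathbb{R}_+\times\mathbb{R}}$ (this is the variational characterization of the saddle point of $\mathcal{L}_{\eta,t}$). Invoking the non-expansiveness of the projection onto a convex set, then expanding the squared norm, yields
\begin{equation*}
\|\mathbf{e}_{\eta,t+1}-\mathbf{e}_{\eta,t}^*\|_2^2 \leq \|\mathbf{e}_{\eta,t}-\mathbf{e}_{\eta,t}^*\|_2^2 - 2\epsilon\bigl(\bm{\Pi}_{\eta,t}(\mathbf{e}_{\eta,t})-\bm{\Pi}_{\eta,t}(\mathbf{e}_{\eta,t}^*)\bigr)^\top(\mathbf{e}_{\eta,t}-\mathbf{e}_{\eta,t}^*) + \epsilon^2\|\bm{\Pi}_{\eta,t}(\mathbf{e}_{\eta,t})-\bm{\Pi}_{\eta,t}(\mathbf{e}_{\eta,t}^*)\|_2^2.
\end{equation*}
Applying the strong-monotonicity bound \eqref{eq:property_Monotone_stochastic} to the inner-product term and the Lipschitz bound \eqref{eq:property_Lipschitz_stochastic} to the squared-norm term collapses the right-hand side to $(1 - 2\epsilon\widetilde{M} + \epsilon^2\widetilde{L}^2)\|\mathbf{e}_{\eta,t}-\mathbf{e}_{\eta,t}^*\|_2^2$. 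Writing $\rho := \sqrt{1 - 2\epsilon\widetilde{M} + \epsilon^2\widetilde{L}^2}$, the step-size condition $0 < \epsilon < 2\widetilde{M}/\widetilde{L}^2$ ensures $\rho < 1$, giving the one-step contraction $\|\mathbf{e}_{\eta,t+1}-\mathbf{e}_{\eta,t}^*\|_2 \leq \rho\,\|\mathbf{e}_{\eta,t}-\mathbf{e}_{\eta,t}^*\|_2$.

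Combining the contraction with the drift bound gives the scalar recursion $a_{t+1} \leq \rho\, a_t + \sigma_{\mathbf{e}}$ with $a_t := \|\mathbf{e}_{\eta,t}-\mathbf{e}_{\eta,t}^*\|_2$. Unrolling and using $\rho<1$, the transient geometric term vanishes and one obtains the steady-state bound
\begin{equation*}
\limsup_{t\to\infty}\|\mathbf{e}_{\eta,t} - \mathbf{e}_{\eta,t}^*\|_2 \leq \frac{\sigma_{\mathbf{e}}}{1-\rho},
\end{equation*}
which is then rewritten in the form stated in the theorem via the definition of $\rho$ (possibly up to the cosmetic rescaling that places $\rho$ rather than $1-\rho$ in the denominator, as the paper does).

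I expect the main obstacle to be making the last step clean and consistent with the precise constant in the stated bound $\sigma_{\mathbf{e}}/\sqrt{1-2\epsilon\widetilde{M}+\epsilon^2\widetilde{L}^2}$: the elementary recursion naturally produces $\sigma_{\mathbf{e}}/(1-\rho)$, and reconciling this with the form $\sigma_{\mathbf{e}}/\rho$ requires either a slightly different expansion (e.g., pulling the drift term inside the squared-norm expansion and using Young's inequality before applying strong monotonicity) or a careful limiting argument that absorbs the $1-\rho$ factor. Everything else—the non-expansiveness of the projection, the strong-monotonicity/Lipschitz sandwich, and the triangle-inequality split—is routine given Lemma \ref{lma:LM_stochastic} and Assumptions \ref{ass:bound_primal}--\ref{ass:bound_constraints}.
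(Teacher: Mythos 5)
Your proposal follows essentially the same route as the paper's Appendix D: non-expansiveness of the projection, expansion of the squared norm, the strong-monotonicity/Lipschitz sandwich from Lemma \ref{lma:LM_stochastic} giving the contraction factor $\alpha=\sqrt{1-2\epsilon\widetilde{M}+\epsilon^2\widetilde{L}^2}<1$, the triangle-inequality split against the drifting optimizer, and the scalar recursion $a_t\leq\alpha a_{t-1}+\sigma_{\mathbf{e}}$. The one point you flag as an "obstacle" is in fact not your problem but the paper's: unrolling the recursion gives $a_t\leq\alpha^t a_0+\sigma_{\mathbf{e}}\sum_{j=0}^{t-1}\alpha^j=\alpha^t a_0+\sigma_{\mathbf{e}}\tfrac{1-\alpha^t}{1-\alpha}$, so the correct asymptotic bound is $\limsup_{t\to\infty}\|\mathbf{e}_{\eta,t}-\mathbf{e}_{\eta,t}^*\|_2\leq\sigma_{\mathbf{e}}/(1-\alpha)$, exactly as you derive. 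The paper writes the geometric sum as $\tfrac{1-\alpha^t}{\alpha}$ instead of $\tfrac{1-\alpha^t}{1-\alpha}$, which is how the denominator $\sqrt{1-2\epsilon\widetilde{M}+\epsilon^2\widetilde{L}^2}$ (rather than $1-\sqrt{1-2\epsilon\widetilde{M}+\epsilon^2\widetilde{L}^2}$) ends up in the stated theorem; the stated "$=$" should also be a "$\leq$", since only an upper bound is established. Do not try to force your argument to reproduce $\sigma_{\mathbf{e}}/\alpha$ — no Young-inequality rearrangement will get you there, and for small step sizes ($\alpha$ near $1$) that expression is strictly smaller than what the recursion can deliver. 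Your version with $\sigma_{\mathbf{e}}/(1-\alpha)$ is the correct statement.
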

\begin{proof}
See Appendix \ref{appendix_online_convergence}.
\end{proof}
Equation \eqref{eq:online_bound_thm} gives the maximum difference between the online optimization-estimation trajectories generated by Algorithm \ref{algorithm:online_OPF-SE algorithm} and the optimizer of the time-varying stochastic OPF-SE problem \eqref{eq:opf_se_cvar}. This bound depends on the variability of two consecutive time-varying problems (i.e., $\sigma_{\mathbf{e}}$), and a ``small enough" step-size $\epsilon$. As demonstrated in the numerical studies, in the next section, this difference between two consecutive time steps is small if we run the online updates in a fast time-scale (i.e., 1-second). In addition, the ``small enough" step-size $\epsilon$ is trivial to decide by trial-and-error, as typically done in distributed optimization \cite{bertsekas2015parallel}. Overall, Theorems \ref{thm:performance_bound} and \ref{thm:online_convergence} verify the optimality and convergence of the proposed online algorithm with respect to the time-varying stochastic OPF-SE problem \eqref{eq:opf_se_cvar}. These two bounds also illustrate the tradeoffs between the convergence performance and optimality due to the regularization and online implementation.


\section{Numerical Results}\label{sec:numerical}
A modified IEEE-37 node test feeder is used to demonstrate our proposed online joint optimization-estimation architecture. Fig.~\ref{fig:37node} gives the modified network as a single-phase equivalent with high penetration of distributed photo-voltaic (PV) systems. The line impedance, shunt admittance and locations of active and reactive loads are derived from the dataset \cite{zimmerman2011matpower}. We deploy the proposed online OPF-SE synthesis algorithm, specifically, to mitigate the overvoltage when the PV availability exceeds the consumption. There are 18 PV systems located at nodes 4, 7, 13, 17, 20, 22, 23, 26, 28, 29, 30, 31, 32, 33, 34, 35, and 36, and their availability power is proportional to the real irradiance data with 1-second granularity in \cite{solardata}. The load profiles are replaced by the real measurements with 1-second resolution from feeders in Anatolia, CA during the week of August, 2012 in  \cite{bank2013development}. The ratings of the inverters are all 200 kVA, except for the inverters at nodes 3, 15 and 16, which are 340 kVA.

\begin{figure}[!htbp]
    \centering
    \includegraphics[width=3.5in]{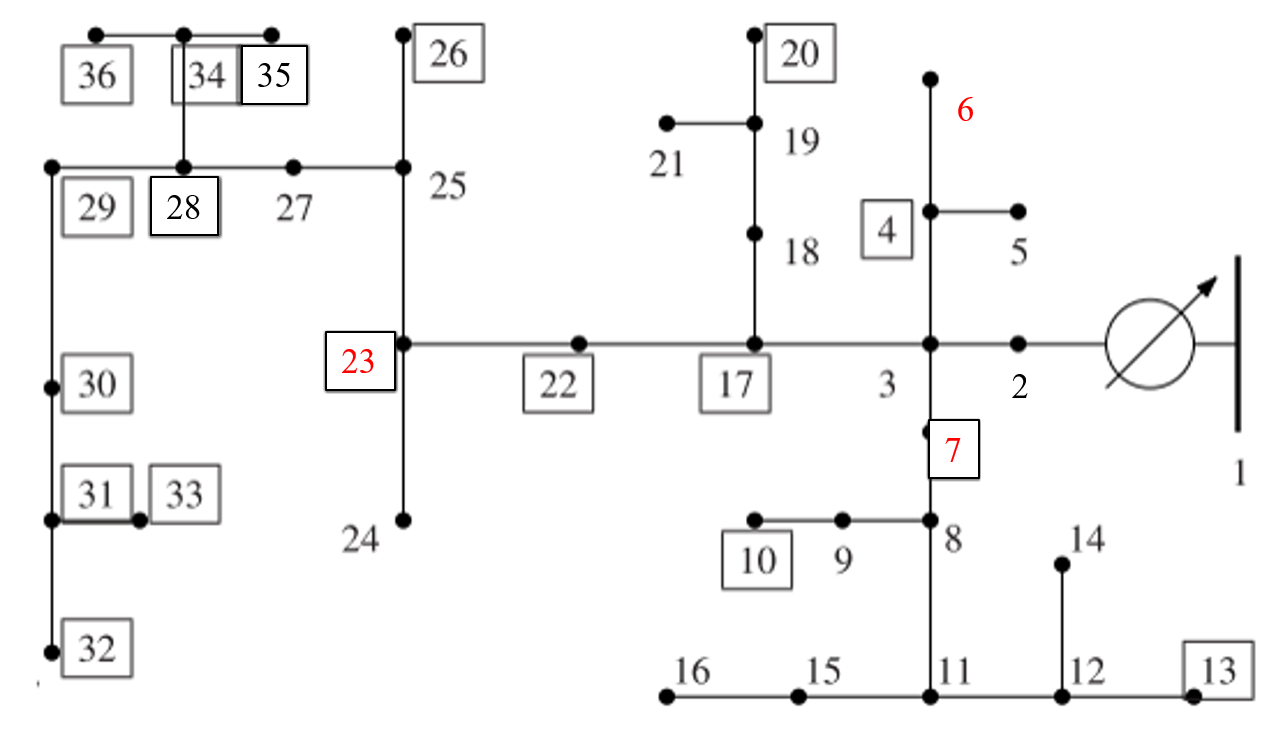}
    \caption{IEEE 37-node distribution network. The boxes indicate the nodes with PV generation. The locations of voltage sensors (i.e., $i = 6, 7$ and $23$) are color-coded in red.}
    \label{fig:37node}
\end{figure}
\begin{figure*}[h]
    \centering
    \includegraphics[width=7.1in]{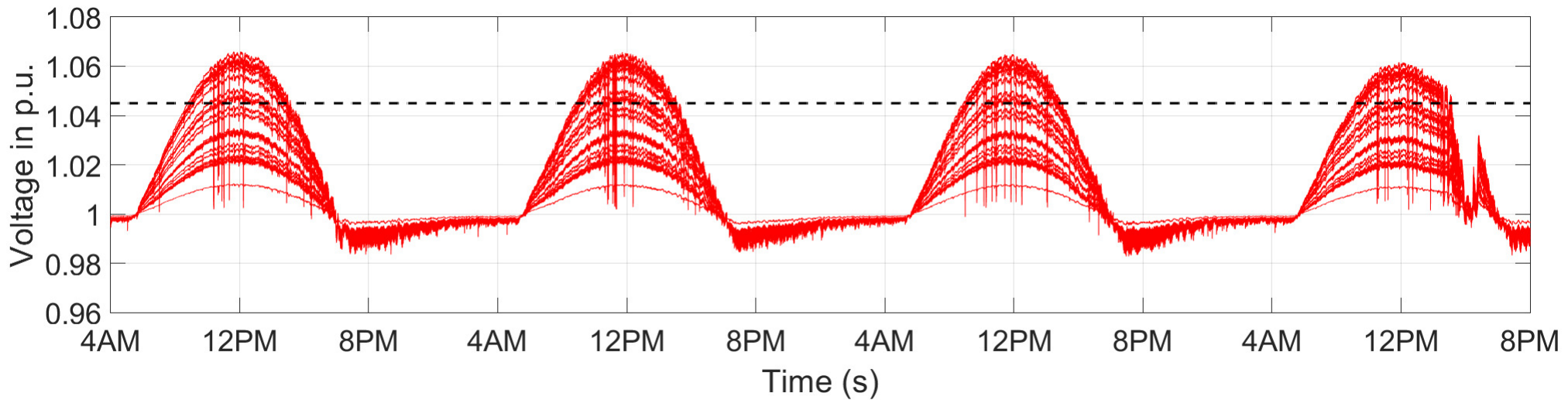}
    \caption{Uncontrolled over-voltage situation from 4:00 AM in day 1 to 8:00 PM in day 4.}
    \label{fig:uncontrol_v}
\end{figure*}
The real-time voltage trajectories of the system over four days without voltage regulation is given in Fig.~\ref{fig:uncontrol_v}. The voltage limits are set to 1.045 and 0.95 p.u. This particular network has a significant over-voltage situation if no voltage regulation is enforced. We implement the proposed online joint OPF-SE Algorithm \ref{algorithm:online_OPF-SE algorithm} to achieve the voltage regulation over time, with the objective to minimize the amount of the real power curtailment and the reactive power compensation from the available PVs (i.e., $C_t^{\textrm{OPF}}(\mathbf{p}_t,\mathbf{q}_t) = \|\mathbf{p}_{t,\textrm{av}} - \mathbf{p}_{t}\|_2^2 + 3\|\mathbf{q}_{t}\|^2_2$). For the inputs of the SE problem, the measurements within the OPF-SE algorithm include the voltage magnitude measurements at nodes 6, 7, and 24, as shown in Fig.~\ref{fig:37node}, with the measurement noise subjected to a Gaussian distribution with zero mean and 1\% standard deviation. Additionally, we also include the pseudo-measurements for all nodal injections (i.e., active and reactive power) with significant noise (e.g., zero mean and 50\% standard deviation of real values), which guarantees the full observability of the SE problem. The voltage information of the whole network is instantaneously fed back to the online OPF gradients via dual variable updates every second based on the online estimation results. 
The gradient step sizes are set to $\alpha_{\textrm{OPF}} = 8\times 10^{-4}$, $\alpha_{\textrm{SE}} = 9\times10^{-4}$ and $\alpha_{\tau} = 3\times10^{-3}$ for primal updates and $\alpha_{\textrm{OPF}} = 5\times 10^{-3}$ for dual updates. The simulation takes $1.3030\times 10^4$ seconds to perform the proposed online joint stochastic OPF-SE algorithm for 318,800 updates (nearly 4 days) in 1-second resolution via the MATLAB interface with MATPOWER \cite{zimmerman2011matpower} on a laptop with 16GM of memory and a 2.8GHz Intel Core i7. One OPF-SE joint update takes 0.0412 seconds (on average) in computation time for every 1-second update in practice. This also includes the time for solving the nonlinear power flow via MATPOWER to simulate the response of the distribution networks to the online DER dispatch decisions. \\


\begin{figure*}[!ht]
\centering
\subfigure[Voltage trajectories for a system controlled by the OPF-SE synthesis algorithm w/o realization of SE \& linearization errors, from 4:00 AM in day 1 to 8:00 PM in day 4.]{\label{fig:control_v_DT} 
\includegraphics[width=7.1in]{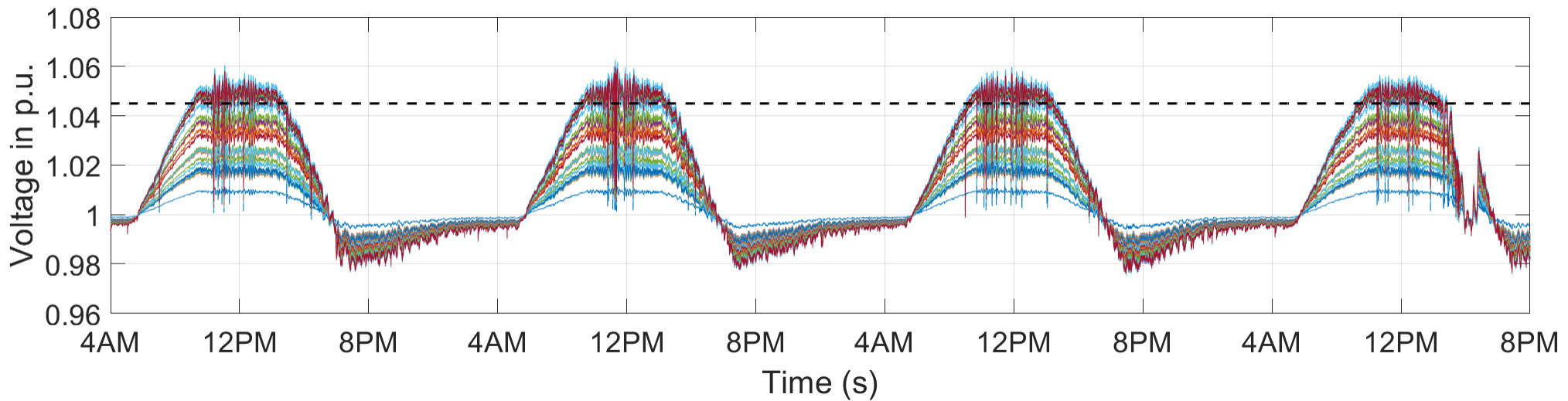}}

\subfigure[Voltage trajectories for a system controlled by the OPF-SE synthesis algorithm w/ realization of SE \& linearization errors. The chance constraint parameter is set to $\beta = 0.10$ with constraint satisfaction probability of 90\%, from 4:00 AM in day 1 to 8:00 PM in day 4.]{\label{fig:control_v_beta_001} 
\includegraphics[width=7.1in]{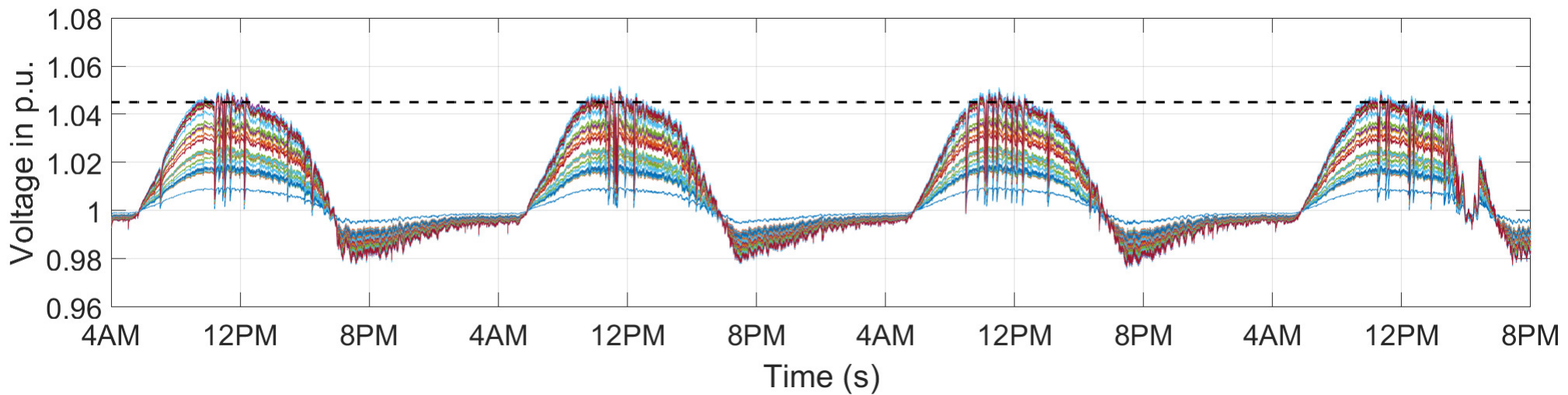}}
\subfigure[Voltage trajectories for a system controlled by the OPF-SE synthesis algorithm w/ realization of SE \& linearization errors. The chance constraint parameter is set to $\beta = 0.05$ with constraint satisfaction probability of 95\%, from 4:00 AM in day 1 to 8:00 PM in day 4.]{\label{fig:control_v_beta_005} 
\includegraphics[width=7.1in]{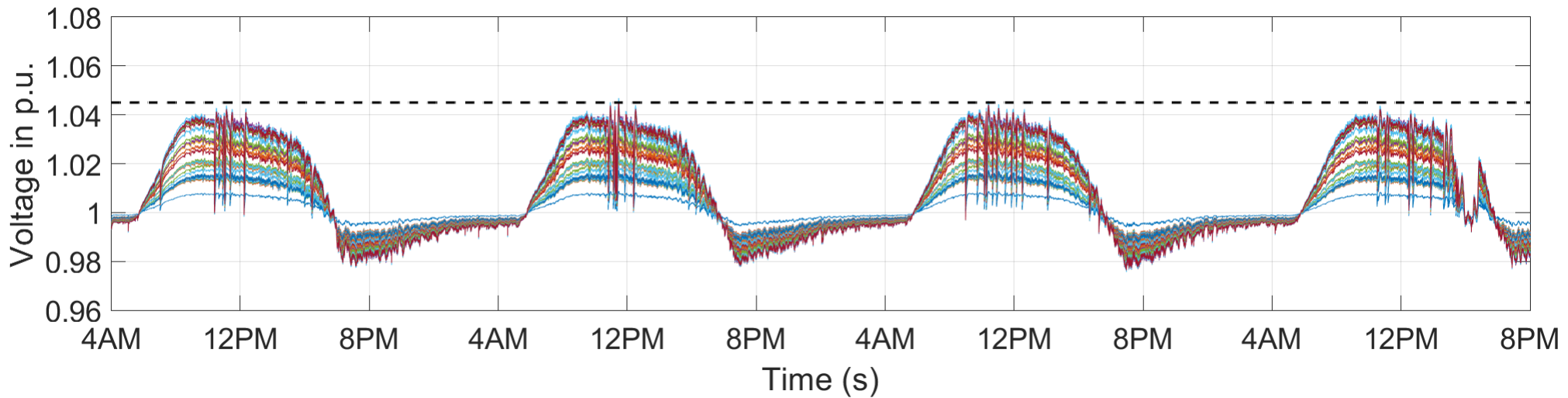}}
\subfigure[Voltage trajectories for a system controlled by the OPF-SE synthesis algorithm with realization of SE \& linearization errors. The chance constraint parameter is set to $\beta = 0.01$ with constraint satisfaction probability of 99\%, from 4:00 AM in day 1 to 8:00 PM in day 4.]{\label{fig:control_v_beta_010} 
\includegraphics[width=7.1in]{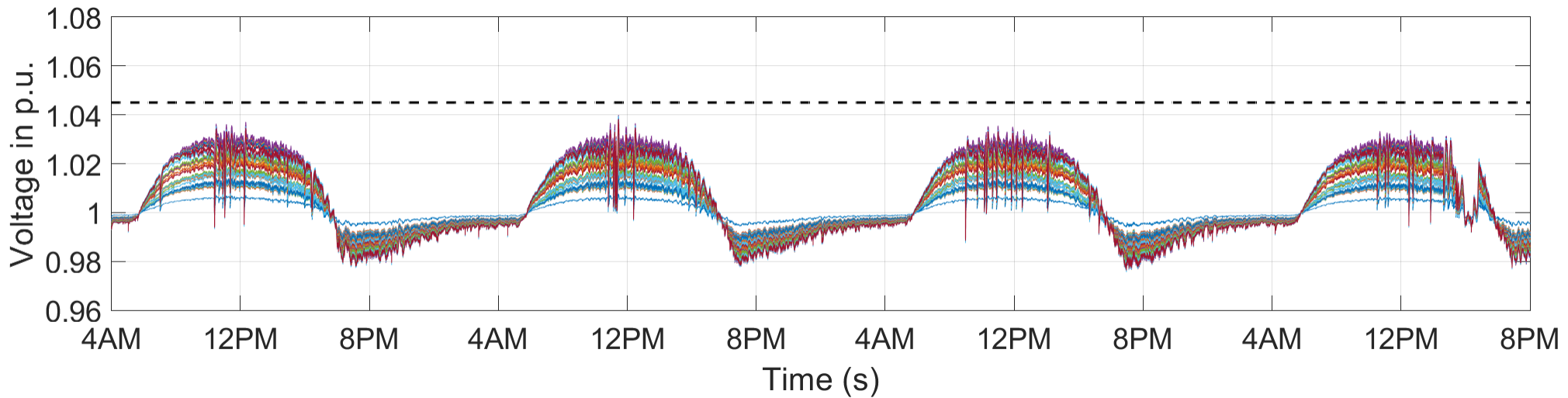}}
\caption{Comparison of voltage trajectories for various values of chance constraint risk aversion $\beta$. As these parameters decrease, more active power from PVs is curtailed, leading to a lower risk of voltage constraints violation.}
\label{fig:control_v} 
\end{figure*}

Fig.~\ref{fig:control_v} visualizes the online voltage regulation trajectories regulated by the proposed online joint OPF-SE algorithm in \eqref{eq:stochastic_opf_se_controller}. In case of the stochastic OPF-SE, the curves for different settings of the chance constraint satisfaction are given. In order to prevent voltage from raising above 1.045 p.u., the online OPF decisions must curtail excessive PV injections given the online SE results over time. In Fig.~\ref{fig:control_v_DT}, the voltage trajectories slightly violate the upper bounds, which is because the deterministic version does not take into account that the voltage estimation and power flow linearization induce errors. The proposed stochastic OPF-SE given in Algorithm 1 provides better robustness to errors from voltage estimation and linearization. The number of samples within the dataset $\Xi_s$ is 100, generated by a Gaussian distribution with zero mean and 1\% standard deviation of the real voltage, which includes the estimation noises and the deviation caused by the linearization of the nonlinear power flow. In practice, the dataset of these errors can be collected through the observation and comparison between the historical records and estimation results.

Figs.~\ref{fig:control_v_beta_001} - \ref{fig:control_v_beta_010} illustrate the online voltage trajectories for varying chance constraint risk aversion (i.e., $\beta = 0.10, 0.05$ and $0.01$). The stochastic formulation accounts for the errors and tradeoff the CVaR of voltage constraint violation and operational cost during the online adjustment of PV injections. The conservativeness of the voltage regulation is controlled by adjusting $\beta$. By explicitly using the sampling data to take into account the inherent errors, the voltage violation risk can be systematically assessed and controlled. The curtailment decisions can be over-conservative when we require higher probability satisfaction for the voltage constraints, as shown in Figs.~\ref{fig:control_v_beta_005} and \ref{fig:control_v_beta_010}. In general, it is possible to prioritize the voltage regulation at certain buses by adapting the corresponding risk aversion, which depends on the risk preference of system operators and the accuracy performance of the estimators. Note that the PV outputs in day 3 and day 4 have large fluctuations. The proposed online joint OPF-SE algorithm however has superior robust performance and provides fast tracking in response to these large variants by generating timely optimization-estimation results. Hence, the voltage regulation can be achieved in an online fashion under a time-varying setting with large variations of PV outputs.

We now compare our OPF-SE approach to an online feedback-based OPF solver with 1) perfect information of all the voltage magnitudes \cite{dall2016optimal} and 2) all the raw measurements of voltage magnitudes, i.e. including noise. The settings are the same as the online OPF-SE algorithm except that the online OPF solver utilizes the direct (or noisy) voltage measurements. As shown in Fig.~\ref{fig:controlled_v_OPFpursuit_perfect}, incorporating full and perfect voltage feedback information into the online OPF solver results in a better profile with only very few violations compared to Fig.~\ref{fig:control_v_DT}. However, in practice, real-time voltage measurements are inherently noisy. Hence, we subject the actual voltage magnitude values to independent Gaussian distributions with zero mean and 1\% standard deviation of their actual values. Fig.~\ref{fig:controlled_v_OPFpursuit_noisy} gives the results for the online feedback-based OPF algorithm with raw voltage magnitude measurements. Due to the inherent sensing noise, the online feedback OPF solver fails to resolve the over-voltage situation in a fast-changing distribution network. Clearly, having an unbiased WLS state estimator can significantly reduce the uncertainties in the feedback loop and promotes the feasibility of the online OPF solver. This indicates that our proposed online OPF-SE algorithm having available pseudo-measurements and a limited number of raw voltage measurements has superior robust performance, compared to the direct usage of all the raw measurements of voltage magnitudes.\\ 

  \begin{figure*}[h]
    \centering
    \includegraphics[width=7.1in]{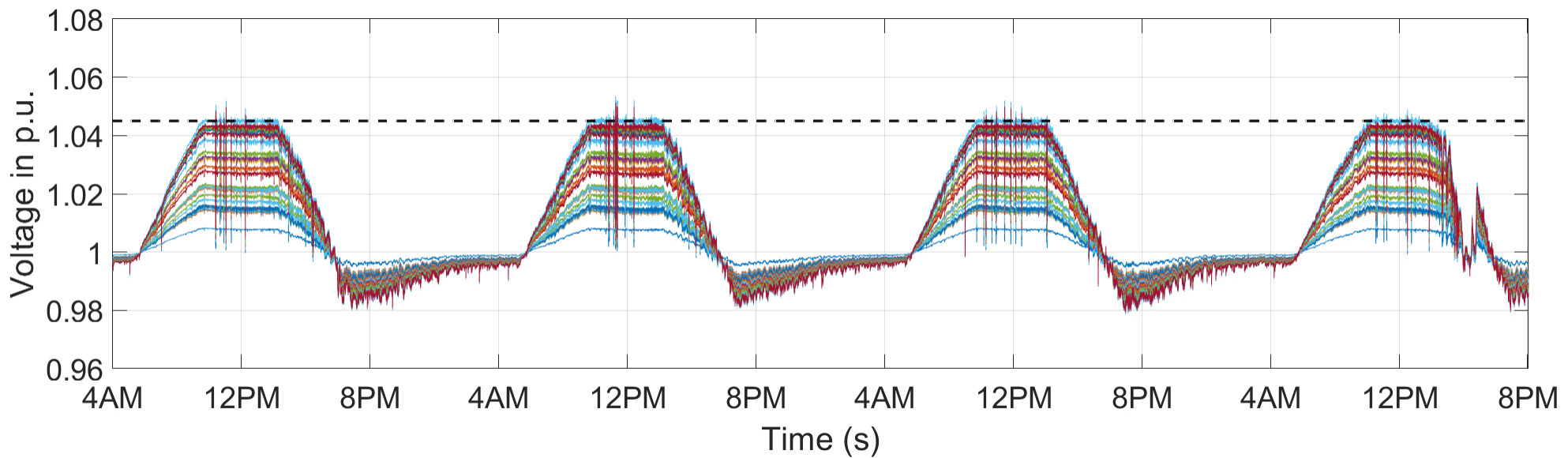}
    \caption{Voltage trajectories for a system controlled by an online feedback-based OPF solver with perfect information of all voltage magnitudes.}
    \label{fig:controlled_v_OPFpursuit_perfect}
    \end{figure*}

 \begin{figure*}[h]
    \centering
    \includegraphics[width=7.1in]{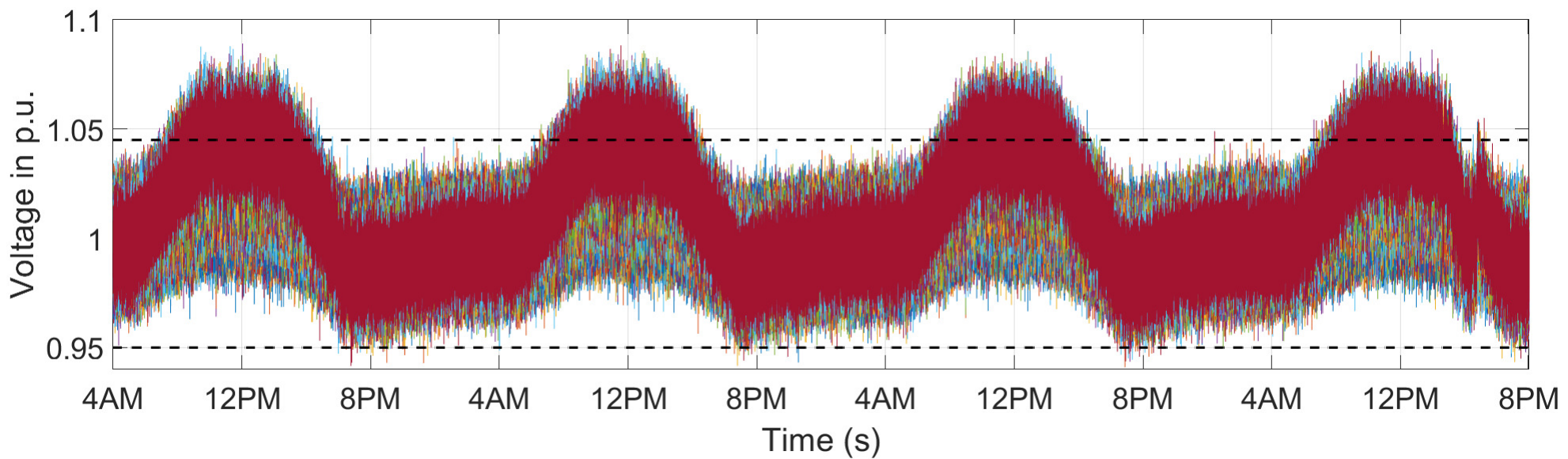}
    \caption{Voltage trajectories for a system controlled by an online feedback-based OPF solver using raw measurements of all voltage magnitudes.}
    \label{fig:controlled_v_OPFpursuit_noisy}
\end{figure*}
Overall, we conclude that the proposed online optimization-estimation architecture is capable to achieve online voltage regulation under a time-varying situation. By introducing the stochastic modelling of inherent noises within the feedback loop, the online regulation enables robust performance with respect to the estimation/linearization errors, which provides operators options to run the systems under various risk aversion settings. The benefit of running optimal control-estimation synthesis has been shown from the perspectives of robustness and computational efficiency.



\section{Conclusions}\label{sec:conclusions}
In this paper, we provide an extensive theoretical analysis and numerical results of an online joint optimization-estimation architecture for distribution networks. An online implementation is proposed to solve a time-varying OPF problem and a WLS SE problem in parallel with a large penetration of renewable penetration. The online stochastic framework explicitly considers the SE and AC power flow linearization errors for robust performance. Convergence and optimality of the proposed algorithm are analytically established. The numerical results demonstrate the necessity and success of bridging the traditional gap between optimization and estimation layers in distribution networks, from the perspectives of computational efficiency, robustness, effectiveness and flexibility.

\bibliographystyle{ieeetr}  
\bibliography{reference} 

\appendix
\subsection{Proof of Lemma \ref{lemma:strongly_monotone} }\label{appendix_lemma_monotone}
\begin{proof}
We decompose the gradient operator $\mathcal{F}_t(\cdot)$ in \eqref{eq:F_operator} equivalently to:
\begin{equation}\nonumber
\begin{aligned}
    & \mathcal{F}_t(\mathbf{u}_t,\bm{\mu}_t,\mathbf{z}_t) = \\
    & ~~~~~~~~~~ \begin{bmatrix}
    \nabla_{\mathbf{u}}C^{\textrm{OPF}}_t(\mathbf{u}_t)\\
    \nabla_{\mathbf{z}}C^{\textrm{SE}}_t(\mathbf{z}_t)\\
    \nabla_{\bm{\mu}}\frac{\phi}{2}\|\bm{\mu}_t\|_2^2
    \end{bmatrix} + \begin{bmatrix}
     0 & 0 & -\mathbf{H}^\top\\
     0 & 0 & 0\\
     0 & \mathbf{H} & 0
     \end{bmatrix}\begin{bmatrix}
     \mathbf{u}_t\\
     \mathbf{z}_t\\
     \bm{\mu}_t
     \end{bmatrix} + \mathbf{a}_0,
\end{aligned}
\end{equation}
where $\mathbf{H} = \begin{bmatrix} \mathbf{R} & \mathbf{X}\\ -\mathbf{R} & -\mathbf{X} \end{bmatrix}$ and $\mathbf{a}_0$ denotes a constant vector. Since the estimation feedback $\mathbf{v}_t^k(\mathbf{z}_t^k) = \mathbf{v}_t^k(\mathbf{u}_t^k)$ always holds every gradient step, the gradient operator can be re-written by replacing $\mathbf{v}_t^k(\mathbf{z}_t^k)$ by $\mathbf{v}_t^k(\mathbf{u}_t^k)$:
\begin{equation}\nonumber
\begin{aligned}
    &\mathcal{F}_t(\mathbf{u}_t,\bm{\mu}_t,\mathbf{z}_t) =\\
    & ~~~~~~~~~~ \begin{bmatrix}
    \nabla_{\mathbf{u}}C^{\textrm{OPF}}_t(\mathbf{u}_t)\\
    \nabla_{\mathbf{z}}C^{\textrm{SE}}_t(\mathbf{z}_t)\\
    \nabla_{\bm{\mu}}\frac{\phi}{2}\|\bm{\mu}_t\|_2^2
    \end{bmatrix} + \underbrace{\begin{bmatrix}
     0 & 0 & -\mathbf{H}^\top \\
     0 & 0 & 0\\
     \mathbf{H} & 0 & 0
     \end{bmatrix}}_{**}\begin{bmatrix}
     \mathbf{u}_t\\
     \mathbf{z}_t\\
     \bm{\mu}_t
     \end{bmatrix} + \mathbf{a}_0,
\end{aligned}
\end{equation}
This rewriting implies that the online dual update depends on the online SE results instead of OPF decisions at every iteration. The first term in the above decomposition is strongly monotone, since each element is strongly convex in $(\mathbf{u}_t,\bm{\mu}_t,\mathbf{z}_t)$. We observe that the second linear operator with the projection matrix $**$ is monotone if and only if the following condition holds \cite{ryu2022large}:
\begin{equation*}
\begin{bmatrix}
     0 & 0 & -\mathbf{H}^\top \\
     0 & 0 & 0\\
     \mathbf{H} & 0 & 0
     \end{bmatrix} + \begin{bmatrix}
     0 & 0 & -\mathbf{H}^\top \\
     0 & 0 & 0\\
     \mathbf{H} & 0 & 0
     \end{bmatrix}^\top \succeq 0.
\end{equation*}
The above condition holds by the definition of positive semi-definite matrix. In the end, the gradient operator $\mathcal{F}_t(\cdot)$ is strongly monotone due to a linear combination of a strong monotone operator and a monotone operator, which concludes the proof.
\end{proof}

\subsection{Proof of Theorem \ref{thm:reverse_engineering}}\label{appendix_reverse_engineering}
\begin{proof}
Introducing a Lagrangian multiplier $\bm{\lambda}_t \in\mathbb{R}^{2N}$ for the constraint \eqref{eq:p3_voltage_constraint_upper}, we have the following regularized Lagrangian with the parameter $\phi > 0$,
\begin{equation}\label{eq:L_opf_se}
\begin{aligned}
& \widetilde{\mathcal{L}}_t\left(\mathbf{u}_t,\mathbf{z}_t,\bm{\lambda}_t \right) = \\
& C_t^{\textrm{OPF}}(\mathbf{u}_t) + C^{\textrm{SE}}_t(\mathbf{z}_t,\mathbf{v}_t(\mathbf{z}_t)) + 
\bm{\lambda}_t^\top \mathbf{r}(\mathbf{v}_t(\mathbf{u}_t)) - \frac{\phi}{2}\|\bm{\lambda}_t\|_2^2.
\end{aligned}
\end{equation}
We have a saddle-point problem:
\begin{equation}\label{eq:saddle_opf_se}
    \max_{\bm{\lambda}_t\in\mathbb{R}_+} \min_{\mathbf{u}_t\in\mathcal{X}_t,\mathbf{z}_t\in\mathcal{X}_t} \widetilde{\mathcal{L}}_t(\mathbf{u}_t,\mathbf{z}_t,\bm{\lambda}_t).
\end{equation}
Denoting $(\mathbf{u}_t^*, \mathbf{z}_t^*,\bm{\lambda}_t^*)$ as a unique primal-dual optimizer of \eqref{eq:saddle_opf_se} at time $t$, the optimality conditions of \eqref{eq:saddle_opf_se} are:
\begin{subequations} \label{eq:opf_se_condition}
\begin{align}
&\left(\nabla_{\mathbf{u}}C^{\textrm{OPF}}_t(\mathbf{u}_t^*) -\nabla_{\mathbf{u}}\mathbf{r}(\mathbf{v}_t(\mathbf{u}_t^*))^\top \bm{\mu}_t^*\right)^\top \left(\mathbf{u}_t - \mathbf{u}_t^*\right)\geq 0,\nonumber\\ & ~~~~~~~~~~~~~~~~~~~~~~~~~~~~~~~~~~~~~~~~~~~~~~~~~~~~\mathbf{u}_t\in\mathcal{X}_t, \label{eq:cond_u}\\
&\left(\mathbf{r}(\mathbf{v}_t(\mathbf{z}_t^*))- \phi\bm{\lambda}^*_t\right)^\top \bm{\lambda}_t^* = 0,\label{eq:cond_dual1}\\
&\bm{\lambda}_t^* \geq 0, \label{eq:cond_lambda}\\
&\left(\nabla_{\mathbf{z}}C^{\textrm{SE}}_t(\mathbf{z}_t,\mathbf{v}_t(\mathbf{z}_t))\right)^\top \left(\mathbf{z}_t - \mathbf{z}_t^*\right)\geq 0, \label{eq:cond_z}
\end{align}
\end{subequations}
which are equivalent to:
\begin{subequations}\label{eq:optimality_joint_opf_se}
\begin{align}
\mathbf{u}_t^{*} & =  \Bigg[\mathbf{u}_t^* - \epsilon\bigg( \nabla_{\mathbf{u}}C_t^{\textrm{OPF}}(\mathbf{u}_t^*) + \nabla_{\mathbf{u}}\mathbf{r}(\mathbf{v}_t(\mathbf{\mathbf{u}}_t^*))^\top\bm{\mu}_t^*\bigg)\Bigg]_{\mathcal{X}_t},\\
\bm{\mu}_t^{*} & = \Bigg[ \bm{\mu}_t^* + \epsilon\bigg(\mathbf{r}(\mathbf{v}_t(\mathbf{z}_t^*)) - \phi\bm{\mu}_t^*\bigg) \Bigg]_{\mathbb{R}_+},\\
\mathbf{z}_t^{*} & =  \mathbf{z}_t^* - \epsilon \nabla_{\mathbf{z}}C_t^{\textrm{SE}}(\mathbf{z}_t^*,\mathbf{v}_t(\mathbf{z}_t^*)).
\end{align}
\end{subequations}
The point $(\mathbf{u}_t^*, \bm{\lambda}_t^*, \mathbf{z}_t^*)$ in \eqref{eq:opf_se_condition} is a unique saddle-point of \eqref{eq:saddle_opf_se} if and only if the point $(\mathbf{u}_t^*,\bm{\mu}_t^*,\mathbf{z}_t^*)$ in \eqref{eq:optimality_joint_opf_se} is an approximate solution of \eqref{eq:opf_se} and the estimation feedback $\mathbf{v}_t^{k}(\mathbf{u}_t^k) = \mathbf{v}_t(\mathbf{z}_t^k)$ holds for all iterations in \eqref{eq:algorithm_joint_t}. This also implies that the optimal voltage profiles are equivalent to the estimation results, such that $\mathbf{v}_t(\mathbf{u}_t^*) = \mathbf{v}_t(\mathbf{z}_t^*)$. In addition of $\bm{\mu}_t^* = \bm{\lambda}_t^*$, the existence and uniqueness of the saddle-point $(\mathbf{u}_t^*,\bm{\lambda}_t^*,\mathbf{z}_t^*)$ of \eqref{eq:saddle_opf_se} then implies the equilibrium $(\mathbf{u}_t^*,\bm{\mu}_t^*,\mathbf{z}_t^*)$ of \eqref{eq:algorithm_joint_t}, which concludes the proof.
\end{proof}

\subsection{Proof of Theorem \ref{thm:performance_bound}}\label{appendix_performance_bound}
\begin{proof}
In this analysis, we omit the time-index subscript $t$ to simplify the notation. We relax $\|\mathbf{x}^* - \mathbf{x}_{\phi,v}^*\|^2_2$ into two parts via the triangle inequality:
\begin{equation}\label{eq:bound_relax}
\|\mathbf{x}^* - \mathbf{x}_{\eta}^*\|^2_2 \leq \|\mathbf{x}^* - \mathbf{x}_{v}^*\|^2_2 +  \|\mathbf{x}^*_v - \mathbf{x}_{\eta}^*\|^2_2,
\end{equation}
and characterize the upper bounds of the two terms on the right-hand side of \eqref{eq:bound_relax} as follows.

1) We first derive the bound of $\|\mathbf{x}^* - \mathbf{x}^*_v\|_2^2$. Define a vector to collect the subset of primal variables in $\tilde{\mathbf{x}}:=[\mathbf{u}^\top,\mathbf{z}^\top]^\top$ and a compact objective expression of \eqref{eq:opf_se_cvar_objective} as $f(\tilde{\mathbf{x}}):= C^{\textrm{OPF}}(\tilde{\mathbf{x}}) + C^{\textrm{SE}}(\tilde{\mathbf{x}})$.
By viewing the regularized Lagrangian $\mathcal{L}_\eta$ in \eqref{eq:stochastic_L_opf_se_phi_v} as a result of a two-step regularization, we first regularize the Lagrangian function \eqref{eq:L_stochastic_opf_se_no-regularizer} of the original problem \eqref{eq:opf_se_cvar} by adding an additional regularization term $\frac{v}{2}\|\bm{\tau}\|^2_2$ on the primal variable $\bm{\tau}$ with a constant $v>0$, i.e.,
\begin{equation}\label{eq:Lv}\nonumber
    \mathcal{L}_{v}(\mathbf{x},\bm{\lambda}) = f(\tilde{\mathbf{x}}) + \bm{\lambda}^\top \mathbf{g}(\mathbf{x}) + \frac{v}{2}\|\bm{\tau}\|^2_2,
\end{equation}
and its corresponding saddle-point problem is given by
\begin{equation}\label{eq:saddle_point_problem_regularized_v}
    \max_{\bm{\lambda}\in\mathbb{R}^N_{+}}\min_{\mathbf{x}\in\mathcal{X}\times\mathcal{X}\times\mathbb{R}^{2N}} \mathcal{L}_{v} (\mathbf{x},\bm{\lambda}).
\end{equation}
The saddle-point $(\mathbf{x}_v^*, \bm{\lambda}_v^*)$ of \eqref{eq:saddle_point_problem_regularized_v} follows the property of
\begin{equation}
    \mathcal{L}_{v}(\mathbf{x}_v^*,\bm{\lambda}) \leq \mathcal{L}_{v}(\mathbf{x}_v^*,\bm{\lambda}_v^*) \leq \mathcal{L}_{v}(\mathbf{x},\bm{\lambda}_v^*), \quad \forall \mathbf{x}, \bm{\lambda}.
\end{equation}
Let $\mathbf{x} = \mathbf{x}^*$ in the second inequality of the preceding relationship, we obtain:
\begin{equation}
\begin{aligned}
& f(\tilde{\mathbf{x}}_v^*) + (\bm{\lambda}_v^*)^\top \mathbf{g}(\mathbf{x}_v^*) + \frac{v}{2}\|\bm{\tau}_v^*\|^2_2 \\ 
& ~~~~~~~~~~~~~~~~~~~~\leq f(\tilde{\mathbf{x}}^*) + (\bm{\lambda}_v^*)^\top \mathbf{g}(\mathbf{x}^*) + \frac{v}{2}\|\bm{\tau}^*\|^2_2.
\end{aligned}
\end{equation}
With functions $\tilde{f}(\mathbf{x}^*):=f(\tilde{\mathbf{x}}^*) + \frac{v}{2}\|\bm{\tau}^*\|^2_2$ and $\tilde{f}(\mathbf{x}^*_v):=f(\tilde{\mathbf{x}}^*_v) + \frac{v}{2}\|\bm{\tau}^*_v\|^2_2$, the above inequality becomes:
\begin{equation}
\begin{aligned}
& \tilde{f}(\mathbf{x}_v^*) + (\bm{\lambda}_v^*)^\top \mathbf{g}(\mathbf{x}_v^*) \leq \tilde{f}(\mathbf{x}^*) + (\bm{\lambda}_v^*)^\top \mathbf{g}(\mathbf{x}^*),
\end{aligned}
\end{equation}
which then leads to:
\begin{equation}\label{eq:proof_f_difference_1}
\begin{aligned}
& \tilde{f}(\mathbf{x}_v^*) - \tilde{f}(\mathbf{x}^*) \leq (\bm{\lambda}_v^*)^\top \mathbf{g}(\mathbf{x}^*) - (\bm{\lambda}_v^*)^\top \mathbf{g}(\mathbf{x}_v^*).
\end{aligned}
\end{equation}
Observe that the newly defined function $\tilde{f}(\cdot)$ is strongly convex in all primal variables $\mathbf{x}$ for some $c>0$,
\begin{equation}\label{eq:f_strong_convexity_1}
  \tilde{f}(\mathbf{x}_v^*) - \tilde{f}(\mathbf{x}^*) \geq \nabla_\mathbf{x}\tilde{f}^\top (\mathbf{x}^*) \left(\mathbf{x}_v^* - \mathbf{x}^*\right) + \frac{c}{2}\|\mathbf{x}_v^* - \mathbf{x}^*\|^2_2. 
\end{equation}
Combining \eqref{eq:f_strong_convexity_1} and \eqref{eq:proof_f_difference_1}, we obtain:
\begin{equation}\nonumber
\begin{aligned}
    &\frac{c}{2}\|\mathbf{x}_v^* - \mathbf{x}^*\|^2_2 \\
    &~~\leq \nabla_\mathbf{x} \tilde{f}^\top(\mathbf{x}^*)\left(\mathbf{x}^* - \mathbf{x}^*_v\right) + \left(\bm{\lambda}_v^*\right)^\top \left(\mathbf{g}(\mathbf{x}^*) - \mathbf{g}(\mathbf{x}_v^*)\right).
\end{aligned}
\end{equation}
Due to the convexity of $\mathbf{g}(\cdot)$, we can write the second multiplication term over all $j$ and add them up,
\begin{equation}\nonumber
\begin{aligned}
&\frac{c}{2}\|\mathbf{x}_v^* - \mathbf{x}^*\|^2_2 \\
    &\leq \nabla_\mathbf{x} \tilde{f}^\top(\mathbf{x}^*)\left(\mathbf{x}^* - \mathbf{x}^*_v\right) + \sum_{j=1}^{2N}\bm{\lambda}_{j,v}^*\nabla_\mathbf{x} \mathbf{g}^\top_j (\mathbf{x}^*)(\mathbf{x}^* - \mathbf{x}^*_v) \\
    &\leq \left\|\nabla_\mathbf{x} \tilde{f}^\top(\mathbf{x}^*)\left(\mathbf{x}^* - \mathbf{x}^*_v\right)\right\|_1 \\
    & ~~~+ \sum_{j=1}^{2N}\bm{\lambda}_{j,v}^*\left\|\nabla_\mathbf{x} \mathbf{g}^\top_j (\mathbf{x}^*)(\mathbf{x}^* - \mathbf{x}^*_v)\right\|_1.
\end{aligned}
\end{equation}  
Employing the Cauchy–Schwarz inequality, the second inequality above results in:
\begin{equation}\nonumber
\begin{aligned}
    &\frac{c}{2}\|\mathbf{x}_v^* - \mathbf{x}^*\|^2_2 \leq \|\nabla_\mathbf{x} \tilde{f}(\mathbf{x}^*)\|_2 \|\mathbf{x}^* - \mathbf{x}^*_v\|_2 \\
    & ~~~~~~~~~~~~~~~~~~~~~+ \sum_{j}\bm{\lambda}_{j,v}^*\|\nabla_\mathbf{x} \mathbf{g}_j (\mathbf{x}^*)\|_2\|\mathbf{x}^* - \mathbf{x}^*_v\|_2.
\end{aligned}
\end{equation}
Since the gradients/subgradients of $\tilde{f}(\cdot)$ and $\mathbf{g}(\cdot)$ are bounded in 
Assumption \ref{ass:Lipschitz_Property_gradient_bound}, this leads to:
\begin{equation}\nonumber
\begin{aligned}
    &\frac{c}{2}\|\mathbf{x}_v^* - \mathbf{x}^*\|^2_2 \\
    &~~~~~~~~\leq G_f \|\mathbf{x}^* - \mathbf{x}^*_v\|_2 + \sum_{j}\bm{\lambda}_{j,v}^* G_g\|\mathbf{x}^* - \mathbf{x}^*_v\|_2.
\end{aligned}
\end{equation}
Next, we divide both sides of the above inequality by $\|\mathbf{x}_v^* - \mathbf{x}^*\|_2$ resulting in
\begin{equation*}
\begin{aligned}
    &\frac{c}{2}\|\mathbf{x}_v^* - \mathbf{x}^*\|_2 \leq G_f + G_g \|\bm{\lambda}_{v}^*\|_1,
\end{aligned}
\end{equation*}
and attain:
\begin{equation} \label{eq:bound_v}
\begin{aligned}
    &\|\mathbf{x}_v^* - \mathbf{x}^*\|_2 \leq 
    \frac{2\left(G_f + G_g \|\bm{\lambda}_{v}^*\|_1\right)}{c}.
\end{aligned}
\end{equation}

2) We now characterize the upper bound of $\|\mathbf{x}_{v} - \mathbf{x}_{\eta}\|_2^2$, the second term on the right-hand side of \eqref{eq:bound_relax}. Consider a regularized Lagrangian with regularization terms on both primal and dual variables, i.e.,
\begin{equation*}
    \mathcal{L}_{\eta}(\mathbf{x},\bm{\lambda}) = f(\tilde{\mathbf{x}}) + \bm{\lambda}^\top g(\mathbf{x}) + \frac{v}{2}\|\bm{\tau}\|^2_2 - \frac{\phi}{2}\|\bm{\lambda}\|^2_2,
\end{equation*}
where $v>0$ and $\phi>0$. Its corresponding saddle-point problem is:
\begin{equation}\label{eq:saddle_point_problem_eta}
    \max_{\bm{\lambda}\in\mathbb{R}_+} \min_{\mathbf{x}\in\mathcal{X}\times\mathcal{X}\times\mathbb{R}_+} \mathcal{L}_{\eta}(\mathbf{x},\bm{\lambda}).
\end{equation}
The saddle-point $(\mathbf{x}_{\eta}^*,\bm{\lambda}_{\eta}^*)$ of \eqref{eq:saddle_point_problem_eta} follows the property of
\begin{equation}\nonumber
    \mathcal{L}_{\eta}(\mathbf{x}_{\eta}^*,\bm{\lambda}) \leq \mathcal{L}_{\eta}(\mathbf{x}_{\eta}^*,\bm{\lambda}_{\eta}^*) \leq \mathcal{L}_{\eta}(\mathbf{x},\bm{\lambda}_{\eta}^*),~~ \forall \mathbf{x}, \bm{\lambda}.
\end{equation}
The left inequality leads to:
\begin{equation}\label{eq:proof_two_g}
    \left(\bm{\lambda}_{\eta}^* - \bm{\lambda}_{v}^*\right)^\top \mathbf{g}(\mathbf{x}_{\eta}^*) - \frac{\phi}{2}\|\bm{\lambda}_{\eta}^*\|^2_2 + \frac{\phi}{2}\|\bm{\lambda}_{v}^*\|^2_2 \geq 0,
\end{equation}
where we set $\bm{\lambda} = \bm{\lambda}_v^*$. We now characterize the term $\left(\bm{\lambda}_{\eta}^* - \bm{\lambda}_{v}^*\right)^\top \mathbf{g}(\mathbf{x}_{\eta}^*)$. Leveraging the definition of convex function, the upper bound of $\mathbf{g}_j(\mathbf{x}_{\eta}^*)$ is given by,
\begin{equation}\label{eq:theorem_proof_gj_phi_v1}
\begin{aligned}
    \mathbf{g}_j(\mathbf{x}_{\eta}^*) & \leq  \mathbf{g}_j(\mathbf{x}_{v}^*) + \nabla_\mathbf{x} \mathbf{g}_j(\mathbf{x}_{v}^*)^\top \left(\mathbf{x}_{\eta}-\mathbf{x}_{v}\right)\\
    & \leq \nabla_\mathbf{x} \mathbf{g}_j(\mathbf{x}_{\eta}^*)^\top \left(\mathbf{x}_{\eta}-\mathbf{x}_{v}\right).
    \end{aligned}
\end{equation}
The last inequality follows because $\mathbf{x}_v^*$ is a solution to the saddle-point problem \eqref{eq:saddle_point_problem_regularized_v}, such that $\mathbf{g}_j(\mathbf{x}_v^*) \leq 0$ for all $j$. We then multiply both sides of \eqref{eq:theorem_proof_gj_phi_v1} by $\bm{\lambda}_{\eta,j}^* \geq 0$ and sum up over all $j$, which leads to:
\begin{equation}\label{eq:lambda_g_inequality_1}
\begin{aligned}
    & \bm{\lambda}_{\eta}^\top \mathbf{g}(\mathbf{x}_{\eta}^*) \\
    & \leq \sum_{j=1}^{2N}\nabla_\mathbf{x} \bm{\lambda}_{\eta,j}\cdot \mathbf{g}_j(\mathbf{x}_{\eta}^*)^\top \left(\mathbf{x}_{\eta}^*-\mathbf{x}_{v}^*\right)\\
    & = \nabla_{\mathbf{x}}\mathcal{L}_{\eta}(\mathbf{x}_{\eta}^*,\bm{\lambda}_{\eta}^*)^\top \left(\mathbf{x}_{\eta}^* - \mathbf{x}_{v}^*\right) - \nabla_{\mathbf{x}}\tilde{f}(\mathbf{x}_{\eta}^*)^\top \left(\mathbf{x}_{\eta}^* - \mathbf{x}_{v}^*\right)\\
    & \leq - \nabla_{\mathbf{x}}\tilde{f}(\mathbf{x}_{\eta}^*)^\top \left(\mathbf{x}_{\eta}^* - \mathbf{x}_{v}^*\right),
    \end{aligned}
\end{equation}
where the second inequality is based on the first-order optimality condition, i.e., $\nabla_{\mathbf{x}}\mathcal{L}_{\eta}(\mathbf{x}_{\eta}^*,\bm{\lambda}_{\eta}^*)^\top \left(\mathbf{x}_{\eta}^* - \mathbf{x}_{v}^*\right) \leq 0$.

On the other hand, we have:
\begin{equation}\label{eq:theorem_proof_gj_phi_v2}
    \mathbf{g}_j(\mathbf{x}_{\eta}^*)\geq \mathbf{g}_j(\mathbf{x}_{v}^*) + \nabla_{\mathbf{x}}\mathbf{g}_j(\mathbf{x}_{v}^*)^\top  \left(\mathbf{x}_{\eta}^* - \mathbf{x}_v^*\right).
\end{equation}
Multiplying both sides of \eqref{eq:theorem_proof_gj_phi_v2} by $-\bm{\lambda}_{v,j}^* \leq 0$ and summing up over all $j$ leads to:
\begin{equation}\label{eq:lambda_g_inequality_2}
\begin{aligned}
& -\bm{\lambda}_v^{*\top} \mathbf{g}(\mathbf{x}_{\eta}^*)\\
& \leq -\sum_{j}\bm{\lambda}_{v,j}^* \mathbf{g}_j(\mathbf{x}_v^*) - \sum_{j}\bm{\lambda}_{v,j}^*\cdot \nabla_{\mathbf{x}}\mathbf{g}_j(\mathbf{x}_{v}^*)^\top \left(\mathbf{x}_{\eta}^*-\mathbf{x}_v^*\right)\\
& = \sum_{j}\bm{\lambda}_{v,j}^*\cdot \nabla_{\mathbf{x}}\mathbf{g}_j(\mathbf{x}_{v}^*)^\top \left(\mathbf{x}_v^* - \mathbf{x}_{\eta}^*\right)\\
& = \nabla_{\mathbf{x}}\mathcal{L}_{v}(\mathbf{x}_{v}^*,\bm{\lambda}_{v}^*)^\top \left(\mathbf{x}_{v}^* - \mathbf{x}_{\eta}^*\right) - \nabla_{\mathbf{x}}\tilde{f}(\mathbf{x}_{v}^*)^\top \left(\mathbf{x}_{v}^* - \mathbf{x}_{\eta}^*\right)\\
& \leq \nabla_{\mathbf{x}}\tilde{f}(\mathbf{x}_{v}^*)^\top \left(\mathbf{x}_{\eta}^*-\mathbf{x}_{v}^* \right).
\end{aligned}
\end{equation}
The first equality follows from the condition $(\bm{\lambda}_{v}^*)^\top \mathbf{g}(\mathbf{x}_v^*) = 0$, which holds due to the complementary slackness condition of  \eqref{eq:saddle_point_problem_regularized_v}. The second inequality is obtained from the first-order optimality condition, i.e., $\nabla_{\mathbf{x}}\mathcal{L}_{v}(\mathbf{x}_{v}^*,\bm{\lambda}_{v}^*)^\top \left(\mathbf{x}_{v}^* - \mathbf{x}_{\eta}^*\right) \leq 0$.

Since the objective function $\tilde{f}(\cdot)$ is strongly convex with a positive constant $c$, 
\begin{equation}\label{eq:tilde_f_strong_convex_nabla}
    \left(\nabla_x\tilde{f}(\mathbf{x}) - \nabla_x\tilde{f}(\mathbf{x}')\right)^\top \left(\mathbf{x} - \mathbf{x}'\right) \leq c\|\mathbf{x} - \mathbf{x}'\|^2_2.
\end{equation}
Substituting \eqref{eq:lambda_g_inequality_1} and \eqref{eq:lambda_g_inequality_2} into \eqref{eq:proof_two_g} and employing \eqref{eq:tilde_f_strong_convex_nabla} yield:
\begin{equation}\label{eq:bound_phiv_v}
    \|\mathbf{x}^*_v - \mathbf{x}_{\eta}^*\|^2_2 \leq \frac{\phi}{2c}\left(\|\bm{\lambda}_v^*\|^2_2 - \|\bm{\lambda}_{\eta}^*\|^2_2\right).
\end{equation}
Finally, substituting \eqref{eq:bound_v} and \eqref{eq:bound_phiv_v} into \eqref{eq:bound_relax} leads to:
\begin{equation}\nonumber
\begin{aligned}
&\|\mathbf{x}^* - \mathbf{x}_{\eta}^*\|^2_2 \\
& ~~~~~~~~\leq \left(\frac{2\left(G_f + G_g \|\bm{\lambda}_{v}^*\|_1\right)}{c}\right)^2 + \frac{\phi}{2c}\left(\|\bm{\lambda}^*_v\|^2_2 - \|\bm{\lambda}_{\eta}^*\|^2_2\right),
\end{aligned}
\end{equation}
which concludes the proof.
\end{proof}

\subsection{Proof of Theorem \ref{thm:online_convergence}}\label{appendix_online_convergence}
\begin{proof}
We begin by investigating the distance between the sequence $\{\mathbf{e}_{\eta,t}\} := \{\mathbf{u}_{\eta,t},\bm{\tau}_{\eta,t},\bm{\lambda}_{\eta,t},\mathbf{z}_{\eta,t}\}$ generated by  \eqref{eq:stochastic_opf_se_controller} at time $t$ and the unique optimizer $\{\mathbf{e}_{\eta,t-1}^*\} := \{\mathbf{u}_{\eta,t-1}^*,\bm{\tau}_{\eta,t-1}^*,\bm{\lambda}_{\eta,t-1}^*,\mathbf{z}_{\eta,t-1}^*\}$ of the saddle-point problem \eqref{eq:saddle_point_problem_regularized_phi_v} at time $t-1$. Based on the definition of the time-varying gradient operator and the non-expansivity property of the project operator, we have:
\begin{equation}\nonumber
\begin{aligned}
     &\|\mathbf{e}_{\eta,t} - \mathbf{e}_{\eta,t-1}^*\|_2 \\ 
     & \leq \|\mathbf{e}_{\eta,t-1} - \epsilon \bm{\Pi}_{\eta,t}(\mathbf{e}_{\eta,t-1}) - \mathbf{e}_{\eta,t-1}^* + \epsilon \bm{\Pi}_{\eta,t}(\mathbf{e}_{\eta,t-1}^*)\|_2. 
     \end{aligned}
\end{equation}
Using the triangle inequality, we obtain:
\begin{equation}\label{eq:proof_4_1}
\begin{aligned}
     &\|\mathbf{e}_{\eta,t} - \mathbf{e}_{\eta,t-1}^*\|_2 \\ 
     & \leq \|\mathbf{e}_{\eta,t-1} - \mathbf{e}_{\eta,t-1}^*\|_2 +  \epsilon^2\|\bm{\Pi}_{\eta,t}(\mathbf{e}_{\eta,t-1}) -\bm{\Pi}_{\eta,t}(\mathbf{e}_{\eta,t-1}^*)\|_2 \\
     & ~~~~- 2\epsilon \left(\bm{\Pi}_{\eta,t}(\mathbf{e}_{\eta,t-1}) -\bm{\Pi}_{\eta,t}(\mathbf{e}_{\eta,t-1}^*)\right)^\top \left(\mathbf{e}_{\eta,t-1} - \mathbf{e}_{\eta,t-1}^*\right)\\
     & \leq \alpha\|\mathbf{e}_{\eta,t-1} - \mathbf{e}_{\eta,t-1}^*\|_2,
     \end{aligned}
\end{equation}
where $\alpha = \sqrt{1 - 2\epsilon\tilde{M} + \epsilon^2\tilde{L}^2}$. The last inequality is due to the strongly monotone and Lipschitz properties of $\bm{\Pi}_{\eta,t}$, shown in \eqref{eq:property_Monotone_stochastic} and \eqref{eq:property_Lipschitz_stochastic}. Now we are ready to show the convergence of the online gradient updates in \eqref{eq:stochastic_opf_se_controller}. For $t>0$, the distance between the sequence $\mathbf{e}_{\eta,t}$ generated by the gradient updates in \eqref{eq:stochastic_opf_se_controller} and the unique saddle point $\mathbf{e}_{\eta,t}^*$ of the optimization problem \eqref{eq:saddle_point_problem_regularized_phi_v} is bounded by
\begin{equation}\nonumber
\begin{aligned}
 \|\mathbf{e}_{\eta,t} - \mathbf{e}_{\eta,t}^*\|_2 & = \|\mathbf{e}_{\eta,t} - \mathbf{e}_{\eta,t}^* + \mathbf{e}_{\eta,t-1}^* - \mathbf{e}_{\eta,t-1}^*\|_2\\
    & \leq \|\mathbf{e}_{\eta,t} - \mathbf{e}_{\eta,t-1}^*\|_2 + \| \mathbf{e}_{\eta,t}^* - \mathbf{e}_{\eta,t-1}^* \|_2\\
    & \leq \alpha\|\mathbf{e}_{\eta,t-1} - \mathbf{e}_{\eta,t-1}^*\|_2 + \sigma_\mathbf{e}.
\end{aligned}
\end{equation}
The last inequality follows from \eqref{eq:proof_4_1} and the difference between the optimization solutions of consecutive time instants, as shown in Assumptions \ref{ass:bound_primal} and \ref{ass:bound_constraints}. We then recursively implement the above inequality until $t = 0$ resulting in:
\begin{equation}
\begin{aligned}\label{eq:proof_online_difference_t}
 \|\mathbf{e}_{\eta,t} - \mathbf{e}_{\eta,t}^*\|_2
    & \leq \alpha^t\|\mathbf{e}_{\eta,0} - \mathbf{e}_{\eta,0}^*\|_2 + \left(\frac{1-\alpha^t}{\alpha}\right)\sigma_\mathbf{e}.
\end{aligned}
\end{equation}
Choosing the step size as $0 < \epsilon <\frac{2\widetilde{M}}{\widetilde{L}^2}$ from Lemma \ref{lma:step_size_stochastic} leads to $0 < \alpha < 1$. As $t \to \infty$, the term $\alpha^t$ on the right-hand-side of \eqref{eq:proof_online_difference_t} will vanish. Given such $\alpha$ and any initial point $\mathbf{e}_{\eta,0}$ located in the feasible set $\mathcal{X}_0$, we let the gradient update \eqref{eq:stochastic_opf_se_controller} run over time as $t \to \infty$, the difference is bounded by
\begin{equation}\label{eq:online_bound}
\begin{aligned}\nonumber
\lim_{t\to\infty} \sup \|\mathbf{e}_{\eta,t} - \mathbf{e}_{\eta,t}^*\|_2 = \frac{\sigma_\mathbf{e}}{\sqrt{1 - 2\epsilon\widetilde{M} + \epsilon^2\widetilde{L}^2}},
\end{aligned}
\end{equation}
which concludes the proof.
\end{proof}

\end{document}